\newtheorem{lemma}{Lemma}
\newtheorem{theorem}{Theorem}
\newtheorem{cor}{Corollary}
\newtheorem{proposition}{Proposition}
\newcommand{\di}{\diamond}
\newcommand{\cI}{\mathcal I}
\newcommand{\cA}{\mathcal A}
\newcommand{\cG}{\mathcal G}
\newcommand{\cS}{\mathscr S}
\newcommand{\cU}{\mathcal U}
\newcommand{\bX}{\mathbf X}
\newcommand{\bY}{\mathbf Y}
\newcommand{\sC}{\mathscr C}
\newcommand{\bin}{\textup{Bin}}
\newcommand{\set}[1]{\left( #1 \right)}
\newcommand{\pran}[1]{\left( #1 \right)}
\newcommand{\event}[1]{\left[\, #1 \,\right]}
\newcommand{\Z}{{\mathbb Z}}
\newcommand{\N}{{\mathbb N}}
\newcommand{\R}{{\mathbb R}}
\newcommand{\Ec}[1]{{\mathbf E}[#1]}
\newcommand{\E}[1]{{\mathbf E}\left[#1\right]}
\newcommand{\CExp}[2]{\mathbf{E}\event{\left. #1 \; \right| \; #2}}
\newcommand{\e}{{\mathbf E}}
\newcommand{\p}[1]{{\mathbf P}\left(#1\right)}
\newcommand{\pc}[1]{{\mathbf P}(#1)}
\newcommand{\Cprob}[2]{\mathbf{P}\set{\left. #1 \; \right| \; #2}}
\newcommand{\EXP}{\e}
\newcommand{\flr}[1]{\ensuremath{\lfloor #1 \rfloor}}
\newcommand{\cel}[1]{\ensuremath{\lceil #1 \rceil}}
\newcommand{\noi}{\noindent}
\date{\today}
\title{Almost optimal sparsification of random geometric graphs}
\author{
Nicolas Broutin\thanks{Projet RAP, Inria Paris--Rocquencourt} \and 
Luc Devroye\thanks{McGill University} \and 
G\'abor Lugosi\thanks{ICREA and Pompeu Fabra University. 
GL acknowledges support by the Spanish Ministry of Science and Technology grant MTM2012-37195.}
}
\begin{document}


\maketitle

\begin{abstract}
A random geometric irrigation graph $\Gamma_n(r_n,\xi)$ has $n$ vertices 
identified by $n$ independent uniformly distributed points $X_1,\ldots,X_n$
in the unit square $[0,1]^2$. Each point $X_i$ selects $\xi_i$ neighbors
at random, without replacement, among those points $X_j$ ($j\neq i$)
for which $\|X_i-X_j\| < r_n$, and the selected vertices are connected to
$X_i$ by an edge. The number $\xi_i$ of the neighbors is an integer-valued
random variable, chosen
independently with identical distribution for each $X_i$ such that $\xi_i$
satisfies $1\le \xi_i \le \kappa$ for a constant $\kappa>1$.
We prove that when $r_n = \gamma_n \sqrt{\log n/n}$ for $\gamma_n \to \infty$
with $\gamma_n =o(n^{1/6}/\log^{5/6}n)$, then the random geometric irrigation graph
experiences \emph{explosive percolation} in the sense that when
$\EXP \xi_i=1$, then the largest connected component has size $o(n)$
but if $\EXP \xi_i >1$, then 
the size of the largest connected component is with high probability $n-o(n)$.
This offers a natural non-centralized sparsification of a random geometric
graph that is mostly connected.
\end{abstract}




\section{Introduction}

We study the following model of random geometric ``irrigation'' graphs.
Let $\bX=\{X_1,\ldots,X_n\}$ 
\nomenclature[X]{$\bX$}{the set of $n$ i.i.d.\ points in $[0,1]^2$}
be a set of uniformly distributed random points in $[0,1]^2$. 
Given a positive number $r_n>0$,
\nomenclature[rn]{$r_n$}{the radius of visibility of the random geometric graph}
we may define the random geometric graph $G_n(r_n)$ with vertex set
$[n]:=\{1,\ldots,n\}$ in which vertex $i$ and vertex $j$ are connected
\nomenclature[\%]{$[n]$}{The set $\{1,2,\dots, n\}$}
if and only if the distance of $X_i$ and $X_j$ does not exceed the 
threshold $r_n$ \cite{Gilbert1961a,Pen03}. 
\nomenclature[Gnr]{$G_n(r_n)$}{$G_n(r_n)$ the random geometric graph with $n$ random points and 
radius of visibility $r_n$}
To avoid non-essential technicalities arising from irregularities around the borders 
of the unit square, we consider $[0,1]^2$ as a torus.
Formally, we measure distance of $x=(x_1,x_2),y=(y_1,y_2)\in [0,1]^2$ by 
$$d(x,y) = \left(\sum_{i=1}^2 \min(|x_i-y_i|, 1-|x_i-y_i|)^2 \right)^{1/2}.$$
It is well known that the connectivity threshold for the graph $G_n(r_n)$ 
is $r_n^\star=\sqrt{ \log n /(n\pi)}$
 (see, e.g., Penrose \cite{Pen03}). 
This means that, for any $\epsilon>0$,
$$
\lim_{n\to\infty}
\p{G_n(r_n) \text{~is connected}} 
=
\left\{
\begin{array}{l l}
0 & \text{~if~} r_n\le (1-\epsilon)r_n^\star\\
1 & \text{~if~} r_n\ge (1+\epsilon)r_n^\star.
\end{array}
\right.
$$
In this paper we consider values of $r_n$ well above the connectivity
threshold. So $G(r_n)$ is connected with high probability.
One may sparsify the graph in a distributed way by selecting, randomly, 
and independently for each vertex $u$, a subset of the edges adjacent to $u$, 
and then consider the subgraph containing those edges only. 
Such random subgraphs are sometimes called
\emph{irrigation graphs} or \emph{Bluetooth networks}
 \cite{FeMaPaPe04,DuJoHaPaSo05,DuHaMaPaPe07,CrNoPiPu09,PePiPu09}. 
A related model of \emph{soft random geometric graphs}, resulting from bond percolation 
on the geometric graph $G_n(r_n)$ is studied by \citet{Penrose2013a}. 
In this paper, we study the following slight generalization of the irrigation graph model:

\medskip
\noi\textsc{the irrigation graph.}\ 
We consider a positive integer-valued random variable $\xi$.
We assume that the distribution of $\xi$ is such that there exists
a constant $\kappa>1$ such that $\xi \in [1,\kappa]$ with probability one.
The \emph{random irrigation graph} $\Gamma_n=\Gamma_n(r_n,\xi)$ is obtained 
as a random subgraph of $G_n(r_n)$ as follows. 
For every $x\in [0,1]^2$, define $\rho(x)=|B(x,r_n)\cap \bX|$ to be the number of 
points of $\bX$ that are visible from $x$, where $B(x,r)=\{y\in [0,1]^2: d(x,y)< r\}$. 
\nomenclature[B]{$B(x,r)$}{The ball of radius $r$ centered at $x$}
\nomenclature[r]{$\rho(x)$}{The number of points of $\bX$ in the ball $B(x,r_n)$}
With every point $X_u\in \bX$, we associate $\xi_u$, an independent copy of the random 
variable $\xi$. Then given that $X_u\in \bX$ and $\xi_u$, 
let $\bY(X_u):=(Y_i(X_u), 1\le i\le \xi_u \wedge \rho(X_u))$ be a subset of elements 
of $\bX\cap B(X_u,r_n)$ chosen uniformly at random, without replacement. 
(Note that this definition allows a vertex to select itself. Such a selection
does not create any edge. In a slight modification of the model, the selection
is from the set $\bX\cap B(X_u,r_n)\setminus \{X_u\}$. 
Since self-selection is unlikely, all asymptotic results remain unchanged in the modified model.) 

We then define $\Gamma_n^+$ as the digraph on $[n]$ in which two vertices $u,v\in [n]$ are 
connected by an oriented edge $(u,v)$ if $X_v=Y_i(X_u)$ for some 
$1\le i\le \xi_u\wedge \rho(X_u)$. 
The out-degree of every vertex 
in $\Gamma_n^+$ is bounded by $\kappa$.
Finally, we define $\Gamma_n$ as the graph on $[n]$
in which $\{u,v\}$ is an edge if either $(u,v)$ or $(v,u)$ is an oriented 
edge of $\Gamma_n^+$. 

We study the size of the largest connected component of 
the random graph $\Gamma_n(r_n, \xi)$ 
for large values of $n$. We say that a property of the
graph holds \emph{with high probability} (whp) when
the probability that the property does not hold is bounded 
by a function of $n$ that goes to zero as $n\to \infty$.

\medskip
\noindent\textsc{connectivity of random geometric irrigation graphs.}\ 
Irrigation subgraphs of random geometric graphs
have some desirable connectivity properties. In particular,
the graph remains connected with a significant reduction of the number
of edges when compared to the underlying random geometric graph.
Connectivity properties of $\Gamma_n(r_n,c_n)$ (i.e., when
$\xi=c_n$ is deterministic, possibly depending on $n$) 
have been 
studied by various authors.  
\citet{DuJoHaPaSo05} showed that when $r_n=r>0$ is independent of $n$,
$\Gamma_n(r,2)$ is connected with high probability. 
Note that when $r> \sqrt{2}/2$ then the underlying random geometric graph
$G_n(r)$ is the complete graph and  
$\Gamma_n(r,2)$ is just
 the $2$-out random subgraph of the complete graph 
analyzed by Fenner and Frieze \cite{FeFr1982a}.
When $r$ is bounded away from zero, the underlying random geometric graph 
$G(r_n)$ is still an expander and $\Gamma_n(r,2)$ exhibits 
a similar behavior. Geometry only comes into play when $r_n\to 0$ 
as $n\to\infty$. In this regime, \citet{CrNoPiPu09}
proved that there exist constants $\gamma_1,\gamma_2$ such that
if $r_n \ge \gamma_1 \sqrt{\log n/n}$ and $c_n \ge \gamma_2 \log(1/r_n)$,
then $\Gamma_n(r_n,c_n)$ is connected with high probability. The 
correct scaling for the connectivity threshold for 
$r_n \sim \gamma \sqrt{\log n/n}$ for sufficiently large $\gamma$
was obtained by \citet{BrDeFrLu2011a} 
who proved that the connectivity threshold for the irrigation graphs 
with $r_n\sim \gamma \sqrt{\log n /n}$ is 
$$c_n^\star:=\sqrt{\frac{2\log n}{\log\log n}},$$
independently of the value of $\gamma$.
More precisely, for any $\epsilon \in (0,1)$, one has
\begin{equation}\label{eq:threshold_conn}
\lim_{n\to\infty}\p{\Gamma_n(r_n,c_n)~\text{is connected}}
=
\left\{
\begin{array}{l l}
0 & \text{if~} c_n \le (1-\epsilon) c_n^\star\\
1 & \text{if~} c_n \ge (1+\epsilon) c_n^\star.
\end{array}
\right.
\end{equation}
Thus, the irrigation 
subgraph of a random geometric graph 
preserves connectivity with high probability 
while keeping only $O(n c_n^\star)$ 
edges, which is much less than the
$\Theta(n\log n)$ edges of the initial random geometric graph. 
However, the obtained random irrigation subgraph
is not authentically sparse as the average degree still grows with $n$. 

One way to obtain connected sparse random geometric irrigation graphs is to 
increase the size $r_n$ of the ``visibility window'' slightly.
Indeed, we show elsewhere \cite{BrDeLu2014} that by taking
$r_n$ slightly larger, as $r_n \sim n^{-1/2+\epsilon}$ for some
fixed $\epsilon >0$, there exists a constant $c=c(\epsilon)$
such that $\Gamma_n(r_n,c)$ is connected with high probability.

\medskip
Otherwise, one needs to relax the constraint of connectivity, and see how this 
affects the graph. 
In this paper we study the emergence of a ``giant'' component
(i.e., a connected component of linear size) of random geometric
irrigation graphs when $r_n\sim \gamma\sqrt{\log n/n}$ for a sufficiently
large constant $\gamma$
(i.e., just above the connectivity threshold of the
underlying random geometric graph $G_n(r_n)$). The main result shows that already 
when $\EXP \xi > 1$, the graph $\Gamma_n(r_n,\xi)$ has a connected component 
containing almost all vertices. Interestingly,
there is not only a phase transition
around a critical value in the edge density but the phase
transition is \emph{discontinuous}. More precisely, we show that
when $\EXP \xi =1$ (or equivalently $\xi=1$, that is, when 
the average degree 
is about $2$), the largest
component of $\Gamma_n(r_n,\xi)$ is of size $o(n)$, while
for any $\epsilon>0$, if $\EXP \xi = 1+\epsilon$, then
with high probability, $\Gamma_n(r_n,\xi)$ has a component of size
$n-o(n)$. The phenomenon when there is a discontinuous phase
transition was coined ``explosive percolation''
and has received quite a lot of attention recently \cite{PaSpStTh11}.
In explosive percolation, the size of the largest component,
divided by the number of vertices, considered as a function
of the average degree, suffers a discontinuous jump
from zero to a positive value. In the present case we have 
even more: the jump is from zero to the maximal value of one.
Therefore, the random graph process experiences a 
\emph{super-explosive phase transition} or \emph{instant percolation}. 
The main results of the paper are summarized in the following theorems.

\begin{theorem}\label{thm:main}
Assume that $\EXP \xi > 1$.
For every $\varepsilon\in(0,1)$ there exists a constant $\gamma >0$ 
such that for
$r_n\ge \gamma \sqrt{\log n/n}$,
$$\p{\sC_1(\Gamma_n(r_n,\xi))\ge (1-\varepsilon) n} \xrightarrow[n\to\infty]{}1~,$$
where $\sC_1(\Gamma_n(r_n,\xi))$ denotes the size of the largest connected
component of the graph $\Gamma_n(r_n,\xi)$.
\nomenclature[C]{$\sC_1(G)$}{The number of vertices of the largest connected component of the graph $G$}
\end{theorem}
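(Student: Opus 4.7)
The plan is a cell-based bootstrapping argument. Fix $\varepsilon\in(0,1)$ and choose $\gamma=\gamma(\varepsilon)$ large. Partition $[0,1]^2$ into cells of side $s_n=c\,r_n$ with $c=c(\varepsilon)$ small enough that every vertex in a cell is within distance $r_n$ of every point in the surrounding $3\times 3$ block of cells. A Chernoff bound applied to the uniform points $X_1,\ldots,X_n$ gives that every cell contains $(1\pm o(1))c^2\gamma^2\log n$ points of $\bX$ with high probability, and a second Chernoff bound on the selection probabilities $\xi_u/\rho(X_u)$ gives $\Omega(\gamma^2\log n)$ edges of $\Gamma_n$ crossing between any two adjacent cells whp.

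I would first exhibit a linear-size connected component $\mathcal{C}_0$ of $\Gamma_n$: the abundance of cross-cell edges makes some single component of $\Gamma_n$ visit every cell, and a standard $\xi$-out random graph analysis applied to each $3\times 3$ block of cells (which is essentially a $\xi$-out graph on $\Theta(\gamma^2\log n)$ vertices with $\EXP\xi>1$) yields a local giant of linear size; these local giants coincide in overlaps, giving $|\mathcal{C}_0|\ge\alpha n$ for a constant $\alpha=\alpha(\EXP\xi)>0$.

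To push $|\mathcal{C}_0|$ from $\alpha n$ up to $(1-\varepsilon)n$, I would use the extra selections $Y_i(X_u)$, $i\ge 2$, from the set $V^{*}=\{u:\xi_u\ge 2\}$ of density $q:=\Pr[\xi\ge 2]>0$, positive because $\EXP\xi>1$ and $\xi\le\kappa$. These contribute $(\EXP\xi-1)n$ extra random edges beyond the obligatory one per vertex. In a bootstrap, if the current giant covers a $(1-\rho)$-fraction of $\bX$, then each vertex $v\notin\mathcal{C}_0$ remains outside with probability at most $\rho^{\xi_v}\exp(-\Omega((1-\rho)\EXP\xi))$: each of $v$'s selections lands in $\mathcal{C}_0$ with probability $\ge 1-\rho$, and $\mathcal{C}_0$-vertices in $B(X_v,r_n)$ pick $v$ at a positive rate from the extra selections. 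Iterating this over $O(\log(1/\varepsilon))$ sprinkling rounds drives the fraction of vertices outside $\mathcal{C}_0$ below $\varepsilon$.

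The main obstacle is the self-consistency of the sprinkling: the bound $\rho^{\xi_v}$ presumes that $\mathcal{C}_0$ is approximately uniformly distributed across $[0,1]^2$, but this uniformity depends on $\mathcal{C}_0$'s structure, which is what we are trying to control. I would address this by an isoperimetric-style lemma asserting that any subset $S\subseteq[n]$ with $\alpha n\le|S|\le(1-\alpha)n$ has edge-boundary of order $\gamma^3\sqrt{n}\,(\log n)^{3/2}$ in $G_n(r_n)$, so that the probability the irrigation avoids all such potential edges is exponentially small---small enough to beat the entropy $\binom{n}{|S|}$ of possible shapes once $\gamma$ is large in terms of $\varepsilon$. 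Proving this delicate quantitative trade-off between entropy and boundary suppression, and handling the initial bootstrap from zero to $\alpha n$ without presupposing uniform distribution, are where most of the technical work is expected to lie.
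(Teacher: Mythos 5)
Your plan diverges from the paper's, and the initial bootstrap step has a gap that the paper explicitly identifies and spends most of its effort circumventing. You take cells of side $c\,r_n$ with $c$ small so that a $3\times 3$ block fits inside a ball of radius $r_n$, and then claim a ``standard $\xi$-out analysis'' on the $\Theta(\gamma^2\log n)$ vertices of each block yields a local giant. But that block occupies only a fraction $O(c^2)<1$ of the area of $B(X_u,r_n)$, so when a vertex $u$ in the block makes its $\xi_u$ selections uniformly in its ball, most of them land \emph{outside} the block; the expected number of choices remaining inside is $\EXP\xi\cdot O(c^2)$, which is strictly less than $1$ for the small $c$ you need. The in-block process is therefore \emph{subcritical}, and no local giant forms. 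This is precisely the obstruction discussed in Section~\ref{sec:gen_approach}: an exploration resembling a branching process needs $\Omega(\log\log n)$ hops before it reaches positive local density, by which time it has spread $\Theta(\log\log n)\cdot r_n$ away, so ``it is not reasonable to expect that two points that are close are connected locally.'' The paper's remedy is to use cells of side $kr_n/2$ with $k$ a \emph{large} constant, extract a supercritical branching random walk inside a cell via a careful box-level discretization (Proposition~\ref{pro:vertex_proba}, proved with a KMT-type strong embedding), and stitch cells together through a mixed site/bond percolation argument on the $m\times m$ grid.

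The sprinkling step also fails quantitatively. With $r_n\sim\gamma\sqrt{\log n/n}$, the edge-boundary in $G_n(r_n)$ of any $S$ with $|S|=\Theta(n)$ is $\Theta(n^2 r_n^3)=\Theta(\gamma^3\sqrt{n}\,(\log n)^{3/2})$ potential edges carried by $\Theta(nr_n)=\Theta(\gamma\sqrt{n\log n})$ vertices near the geometric boundary; each such vertex fails to cross with probability bounded away from $0$, so the probability that the irrigation avoids the whole boundary is only $\exp(-O(\gamma\sqrt{n\log n}))$. The entropy $\binom{n}{|S|}=\exp(\Theta(n))$ overwhelms this for every constant $\gamma$; no choice of $\gamma$ closes the gap, and the failure is not a matter of ``delicate technical work'' but of a polynomial mismatch in the exponent. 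You would also need to track which vertices have already expended their choices during the bootstrap; the paper handles this with an explicit budget of ``forbidden'' vertices per cell (Lemma~\ref{lem:forbidden-points}), and instead of a union bound over all $S$, each unreached vertex grows its own second web that hooks to the first with high probability (Section~\ref{sec:pull_phase}).
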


\medskip
We also prove that when $\xi=1$, with probability tending to one, 
the largest connected component of the irrigation graph $\Gamma_n(r_n,\xi)$ is sublinear:
\begin{theorem}\label{thm:sublinear_critical}
Suppose that $r_n=o((n\log n)^{-1/3})$. Then, for any $\varepsilon>0$
$$\p{\sC_1(\Gamma_n(r_n,1)) \ge \varepsilon n} \xrightarrow[n\to\infty]{}0.$$
\end{theorem}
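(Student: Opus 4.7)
The key structural observation is that $\Gamma_n(r_n,1)$ is a \emph{pseudoforest}: since $\xi=1$, each vertex has out-degree at most $1$ in $\Gamma_n^+$, so every connected component of $\Gamma_n$ consists of a single directed cycle together with trees rooted at the cycle vertices, and $|E(\Gamma_n)|\le n$. My plan is to explore the component of an arbitrary vertex $v$ by breadth-first search, show that this exploration is \emph{slightly sub-critical} with an exponential tail on the component size, and then take a union bound over the $n$ possible starting vertices.

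Writing $R := n\pi r_n^2$, $I(u) := |\{w:f(w)=u\}|$ for the in-degree in $\Gamma_n^+$, and $N(u) := \{f(u)\}\cup\{w:f(w)=u\}$ for the $\Gamma_n$-neighborhood of $u$, inclusion--exclusion gives $|N(u)| = 1 + I(u) - \mathbf{1}\{f(f(u))=u\}$ (ignoring the negligible self-selection event). The out-degrees sum to $n$, so $\Ec{I(u)}=1$; by symmetry $\Pr(f(f(u))=u) = \Ec{1/\rho(f(u))} \sim 1/R$, and thus $\Ec{\deg_{\Gamma_n}(u)} = 2 - (1+o(1))/R$. When a vertex $u$ is explored in BFS, having been reached via a parent edge from some $p$, the number of newly discovered neighbors is at most $|N(u)|-1$, with expectation at most $1 - (1+o(1))/R$. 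Thus the BFS is stochastically dominated by a Galton--Watson branching process with offspring mean $\mu \le 1-\delta$ for $\delta = \Theta(1/R)$.

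Applying a Freedman-type martingale concentration inequality to the depletion walk $X_t := |A_t|-t$ (whose increments have conditional mean $\le -\delta$ and $O(1)$ conditional variance) gives
\[
\Pr(|\mathrm{comp}(v)| \ge k) \le \exp(-c\,k\,\delta^2) \le \exp(-c\,k/R^2)
\]
for some $c > 0$ and all $k$ large enough. A union bound over $v \in [n]$ yields $\Pr(\sC_1(\Gamma_n) \ge \varepsilon n) \le n\exp(-c\varepsilon n/R^2)$, which tends to $0$ as soon as $R^2 \log n = o(n)$, that is, $n r_n^4 \log n = o(1)$. The hypothesis $r_n = o((n\log n)^{-1/3})$ implies $n r_n^3 \log n = o(1)$, whence $n r_n^4 \log n = r_n \cdot n r_n^3 \log n = o(r_n) = o(1)$, and the theorem follows.

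The \emph{main obstacle} is making the stochastic domination by a sub-critical Galton--Watson process rigorous. The BFS on $\Gamma_n$ is not itself a branching process: the positions $\bX$ are shared across all vertices (producing geometric correlations), and the in-neighbor sets of distinct vertices are coupled through the constraint that $f$ is a function (each $w$ is an in-neighbor of at most one $u$). The cleanest way around this is to reveal $f$ \emph{lazily} along the BFS order, maintaining the conditional distribution of the unrevealed out-edges at every step; one must then verify that the conditional step-mean of the BFS increment is at most $1 - \Theta(1/R)$ with $O(1)$ conditional variance, uniformly in the history, so that Freedman's inequality applies. In the very sparse regime $n r_n^2 = O(1)$, where $\delta$ is bounded below by a positive constant, a more direct appeal to sub-critical continuum percolation on $G_n(r_n)$ may provide a simpler route, since $\Gamma_n \subseteq G_n(r_n)$ already has only sublinear components in that regime.
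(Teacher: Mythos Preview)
Your approach is genuinely different from the paper's, and the heuristic is appealing, but the central step does not hold as stated. You claim that at each BFS step the conditional number of new neighbors has mean at most $1-\Theta(1/R)$, uniformly in the history, and then apply Freedman to the depletion walk. This uniform bound is \emph{false}. Consider the case where $u$ is reached because its parent's out-edge landed on it, i.e.\ $f(p)=u$. Conditioning on $f(p)=u$ guarantees that $p$ is an in-neighbour of $u$, so the conditional in-degree satisfies $\Ec{I(u)\mid f(p)=u}\approx 1+\sum_{w\neq p,u}1/\rho(w)\approx 2$, and hence $\Ec{|N(u)|-1\mid f(p)=u}\approx 2$, not $\le 1-\Theta(1/R)$. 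Every explored vertex has exactly one such ``out-child'', so this is not a rare event you can discard. Your passage from $\Ec{|N(u)|}=2-(1+o(1))/R$ to the conditional bound is precisely the size-bias trap: conditioning on $\{p,u\}$ being an edge does change the law of $|N(u)|$. At best you are looking at a two-type process (children reached via the parent's out-edge versus via the child's out-edge) whose mean matrix has both diagonal entries $1-\Theta(1/R)$ but an off-diagonal entry close to $1$; the spectral radius may still be $1-\Theta(1/R)$, but establishing this rigorously and turning it into a tail bound is a substantial argument you have not supplied, and you yourself flag it as the main obstacle.

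The paper sidesteps all of this by exploiting the directed structure rather than the undirected BFS. From any vertex, follow \emph{out}-edges $v,f(v),f^2(v),\ldots$ until a repeat. At step $t$ the previous vertex $v_{t-1}$ lies in $B(v_t,r_n)$, so the probability of a repeat is at least $1/\rho(v_t)\ge 1/(1+tnr_n^2)$ on a high-probability density event; hence the longest directed path has length $\ell=O((1+tnr_n^2)\log n)$ with high probability. Since every vertex in the component reaches the unique cycle along such a path, the whole component lies in a ball of radius $r_n\ell$, and the density bound gives $\sC_1\le 2+t n(r_n\ell)^2=O((1+nr_n^2)^3\log^2 n)$, which is $o(n)$ exactly when $r_n=o((n\log n)^{-1/3})$. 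This argument is short, uses only a geometric tail and a union bound, and avoids any delicate conditional-mean bookkeeping.
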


The two theorems may be combined to prove the following ``instant-percolation'' result.

\begin{cor}
Suppose
$r_n / \sqrt{\log n/n}\to \infty$ and $r_n= o((\log n/n)^{1/3}$.
Then $\Gamma_n(r_n,\xi)$ 
experiences \emph{super-explosive percolation} in the sense that
\begin{itemize}
\item[(i)]
if $\EXP \xi\le 1$, then $\sC_1(\Gamma_n(r_n,\xi))=o(n)$ in probability;
\item[(ii)]
if $\EXP \xi>1$, then $n-\sC_1(\Gamma_n(r_n,\xi))=o(n)$ in probability.
\end{itemize}
\end{cor}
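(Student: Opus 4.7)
The plan is to derive both parts directly from the two preceding theorems; the corollary is essentially a packaging of Theorem \ref{thm:main} and Theorem \ref{thm:sublinear_critical}, together with one elementary observation about the support of $\xi$.

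For part (ii), I would simply invoke Theorem \ref{thm:main}. Fix $\varepsilon\in(0,1)$; the theorem provides a constant $\gamma=\gamma(\varepsilon)>0$ such that $\p{\sC_1(\Gamma_n(r_n,\xi)) \ge (1-\varepsilon)n} \to 1$ whenever $r_n \ge \gamma\sqrt{\log n/n}$. The hypothesis $r_n/\sqrt{\log n/n} \to \infty$ guarantees this lower bound for all $n$ large enough (for this particular $\gamma$), so $n - \sC_1(\Gamma_n(r_n,\xi)) \le \varepsilon n$ with probability tending to one. Since $\varepsilon$ is arbitrary, $(n-\sC_1)/n \to 0$ in probability, which is the stated conclusion.

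For part (i), the key observation is that the model definition forces $\xi$ to be integer-valued with $\xi \in [1,\kappa]$, so $\xi \ge 1$ almost surely. Combined with the hypothesis $\EXP \xi \le 1$, this implies $\xi = 1$ almost surely, and therefore the random graph $\Gamma_n(r_n,\xi)$ coincides (in distribution) with $\Gamma_n(r_n,1)$. Sublinearity of the largest component then follows directly from Theorem \ref{thm:sublinear_critical}: its hypothesis $r_n = o((n\log n)^{-1/3})$ is implied by the growth condition on $r_n$ assumed in the corollary, and the conclusion gives $\p{\sC_1(\Gamma_n(r_n,1)) \ge \varepsilon n} \to 0$ for every $\varepsilon > 0$, i.e., $\sC_1 = o(n)$ in probability.

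There is essentially no technical obstacle here, since the substantive work is carried out in Theorems \ref{thm:main} and \ref{thm:sublinear_critical} themselves. The only modest point worth stressing is the degeneracy argument for $\xi$ in part (i), which is specific to the model's support constraint $\xi \ge 1$: without it, the critical case $\EXP \xi = 1$ would admit nontrivial (non-constant) distributions for $\xi$, and in particular the possibility $\xi=0$ with positive probability would produce a qualitatively different picture requiring a separate analysis.
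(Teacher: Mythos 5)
Your approach is exactly the intended one, and your identification of the one substantive step --- that $\xi \ge 1$ almost surely together with $\EXP\xi \le 1$ forces $\xi = 1$ almost surely, so that Theorem~\ref{thm:sublinear_critical} applies to part (i) --- is precisely the observation the paper leaves implicit with its one-line remark that "the two theorems may be combined." Part (ii) is, as you say, an immediate consequence of Theorem~\ref{thm:main}.

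One issue you should not glide over, however: you assert that the hypothesis $r_n = o((n\log n)^{-1/3})$ of Theorem~\ref{thm:sublinear_critical} "is implied by the growth condition on $r_n$ assumed in the corollary." As the corollary is literally printed, with the condition $r_n = o((\log n/n)^{1/3})$, this implication is false. One has
$(\log n/n)^{1/3} = (\log n)^{2/3}\,(n\log n)^{-1/3}$,
so the corollary's stated upper bound on $r_n$ is larger by a factor $(\log n)^{2/3}$; for instance $r_n = (\log n/n)^{1/3}/\sqrt{\log n}$ satisfies the corollary's hypotheses but not Theorem~\ref{thm:sublinear_critical}'s. The discrepancy is a typo in the corollary (the unmatched parenthesis in the source is another hint): the abstract's condition $\gamma_n = o(n^{1/6}/\log^{5/6} n)$ for $r_n = \gamma_n\sqrt{\log n/n}$ translates exactly to $r_n = o((n\log n)^{-1/3})$, which is what the corollary should say. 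Your proof is correct once the corollary's hypothesis is read this way, but as written your step silently presupposes the correction; you should make the correction explicit rather than assert an implication that does not hold under the literal reading.
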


Note that for classical models of random graphs, including both random
geometric graphs \cite{Pen03} and Erd\H{o}s--R\'enyi random graphs
\cite{JaLuRu2000,Bollobas2001}, the proportion of vertices in the largest 
connected component is bounded away from one whp when the average degree is bounded 
by a constant. Furthermore, for these graphs, the size of the
largest connected component is continuous in the sense that the
(limiting) proportion of vertices in the largest connected component
vanishes as the average degree tends to the threshold value. The behavior of 
random geometric irrigation graphs
is very different, since the largest connected component contains
$n-o(n)$ vertices as soon as the expected degree is greater than 
two.
In particular, from a practical point of view, the irrigation graph provides an 
\emph{almost optimal} and \emph{distributed} algorithm for sparsification of the 
underlying graph. (Here distributed refers to the fact that every 
vertex makes its choices independently, as in distributed algorithms.)
Indeed the largest connected component contains $n-o(n)$ edges, and we achieve this with 
only $n(1+\epsilon)$ edges while any such graph must contain at least $n-o(n)$ edges.


Recall that when the underlying graph is the complete graph $K_n$ (i.e., when 
$r\ge \sqrt{2}/2$),  then the irrigation graph model corresponds to the $c$-out
graphs studied by \citet{FeFr1982a} if $\xi=c\in \N$ almost surely. 
With $c=1$, a random $1$-out subgraph of $K_n$ is a just a random mapping
\cite{Kolchin1986,FlOd1990a} (a uniformly random function from $[n]$
to $[n]$).  In particular, for $c=1$ and with high probability, a
random $1$-out subgraph of $K_n$ contains a connected component of
linear size but is not connected. For $c\ge 2$, a random $c$-out
subgraph of $K_n$ is 2-vertex and 2-edge connected with high
probability \cite{FeFr1982a}. (See also
\cite{Bender1974a,Bender1975a}: although a bit cryptic, Theorem~3
there applies to unions of two random mappings and shows that such graphs are
asymptotically connected.)  One may easily verify that if we write
$K_n(\xi)$ for the random irrigation subgraph of $K_n$ in which vertex
$i$ chooses $\xi_i$ random neighbors and $\Ec{\xi}>1$, then, as $n\to\infty$,
$$\p{K_n(\xi) \text{~is connected}}\to1~,$$
as an easy generalization of \citet{FeFr1982a}.

\section{Preliminaries: discretization and regularity of the point set}\label{sec:prelem}

The proof relies heavily on different levels of 
discretization of the torus into smaller sub-squares, as shown in
Figure \ref{fig:discretization}.
The largest of these sub-squares are called \emph{cells} and
have side length about $kr_n/2$ where $k$ is a fixed large odd natural number.
More precisely, let $k\ge 1$ be odd and define
\begin{equation}\label{eq:def_m-rp}
m:=\left\lceil \frac 2 {kr_n} \right\rceil 
\qquad \text{and}\qquad 
r_n':= \frac 2 {km}~.
\end{equation}
The unit square is then partitioned into $m^2$ congruent cells of side 
length $1/m=r_n'k/2$. Note that $(1-kr_n)r_n \le r_n' \le r_n$ for all
$n$ large enough.

A cell $Q$ is further partitioned into $k^2d^2$ square \emph{boxes}, 
\nomenclature[Q]{$Q$}{A cell}
each of side length $1/(mkd) = r_n'/(2d)$, for some natural number $d\ge 1$.
We make $d$ odd and call $C(Q)$ the central box of cell $Q$.
\nomenclature[C]{$C(Q)$}{The central box of cell $Q$}
A typical square box is denoted by $S$
\nomenclature[S]{$S$}{A square box}, and we let $\cS(Q)$
\nomenclature[S]{$\cS(Q)$}{The collection of boxes in the cell $Q$}
be the collection of all boxes in cell $Q$.

\begin{figure}
\centering
\includegraphics[scale=.7]{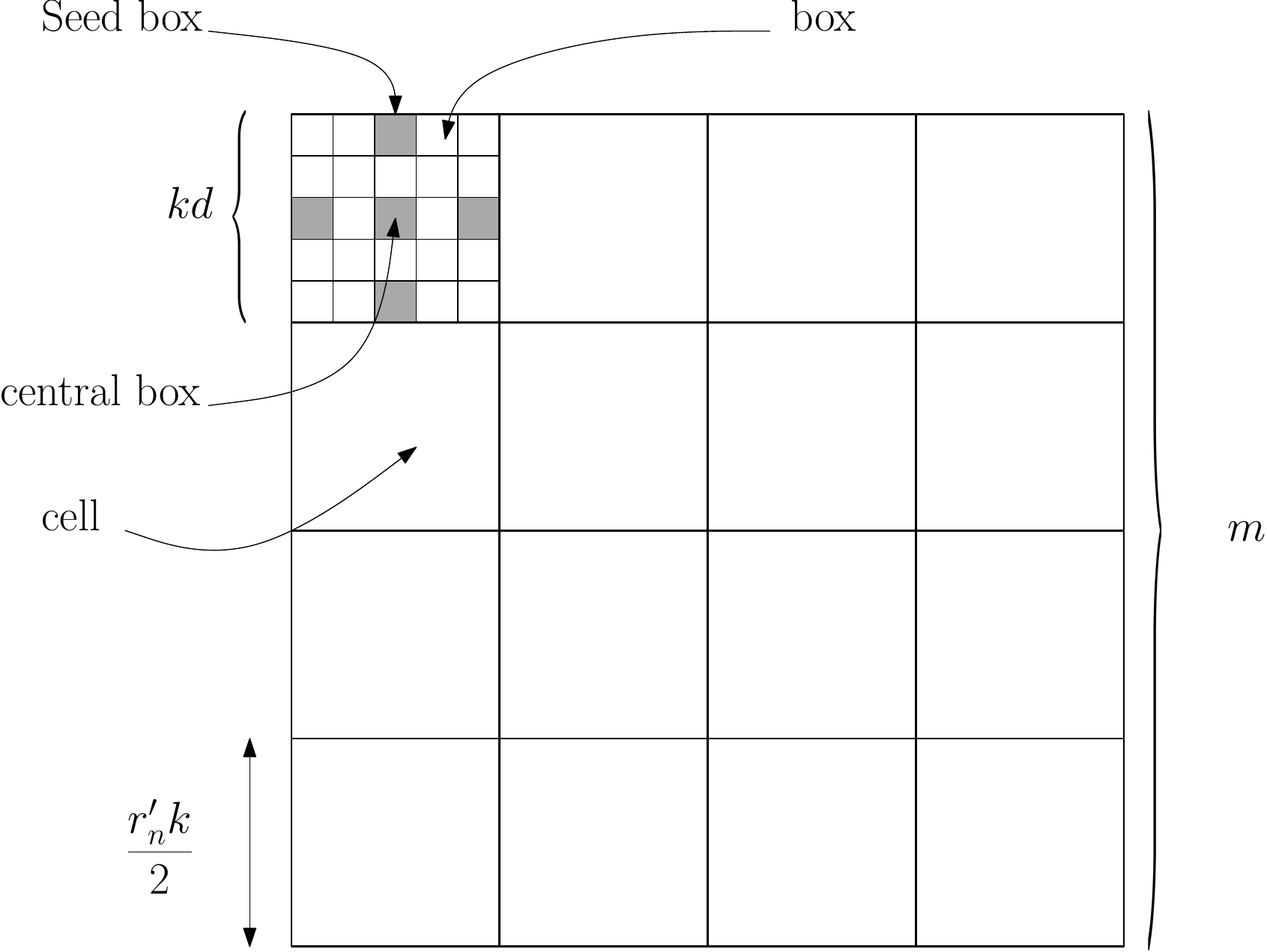}
\caption{\label{fig:discretization}
The different levels of discretization of the torus $[0,1]^2$ is shown
here with $k=5$, $d=1$, and $m=4$.
The torus is sub-divided into $m^2$ congruent squares, called cells, and each cell
is further divided into $k^2d^2$ small squares, called boxes.
The central box and the seed boxes of one of the cells are marked.
 }
\end{figure}

Note that there are two independent sources of
randomness in the definition of the random graph
$\Gamma_n(r_n,\xi)$. One comes from the random underlying geometric
graph $G_n(r_n)$ (i.e., the collection  $\bX$ of random points), and the
other from the choice of the neighbors of each vertex. 
We will always work conditionally on the locations of the points in $\bX$. 
The first step is to guarantee that, with high probability, the random set $\bX$ 
satisfies certain regularity properties. In the rest of the proof we assume that the 
point set $\bX$ satisfies the required regularity property, fix $\bX$ and work 
conditionally.


In the course of the proofs, we condition on the locations of the
points $X_1,\ldots,X_n$ and assume that they are sufficiently
regularly distributed. The probability that this happens is estimated
in the following simple lemma that relies on standard
estimates of large deviations for
binomial random variables.

Fix odd positive integers $k$ and $d$ and consider the partitioning of $[0,1]^2$
into cells and boxes as described above.
For a cell $Q$, and a box $S\in \cS(Q)$, we have
$$\Ec{|\bX \cap S|}=\frac{n}{(mkd)^2}=\frac {n r_n'^2}{4d^2}.$$
Fix $\delta\in (0,1)$. A cell $Q$ is called $\delta$-\emph{good} if for every 
$S\in \cS(Q)$, one has 
$$\frac{(1-\delta)nr_n^2}{4d^2}\le |\bX \cap S| \le \frac{(1+\delta)n r_n^2}{4d^2}.$$

\begin{lemma}\label{lem:strong_uniform}
For every $\delta\in (0,1)$, there exists $\gamma>0$ such that if 
$r_n\ge \gamma\sqrt{\log n/ n}$, then for all $n$ large enough, 
$$\inf_Q \pc{Q \text{~is $\delta$-good}} \ge 1 - 2(mkd)^2 n^{-\gamma^2\delta^2 / (24 d^2)}.$$
In particular, if $\gamma^2> 24d^2/\delta^2$ then 
$$ \lim_{n\to \infty} \pc{\text{every cell $Q$ is $\delta$-good}} = 1~.$$
\end{lemma}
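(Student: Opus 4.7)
The plan is to view $|\bX\cap S|$ as a binomial random variable for each box $S$, apply a multiplicative Chernoff bound, and then take a union bound over the boxes in a cell (and, for the ``in particular'' statement, over all cells of the torus).

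First I would note that, for any fixed box $S\in\cS(Q)$, the variable $|\bX\cap S|$ has distribution $\bin(n, 1/(mkd)^2)$, so its mean is exactly $\mu_S = n r_n'^2/(4d^2)$. Since $r_n'\in[(1-kr_n)r_n,\,r_n]$ and $kr_n\to 0$, for any small $\eta>0$ we have $r_n'^2\ge (1-\eta)r_n^2$ for all $n$ large enough. Consequently the target interval $[(1-\delta)nr_n^2/(4d^2),\,(1+\delta)nr_n^2/(4d^2)]$ in the definition of $\delta$-good can be rewritten as a two-sided $(1\pm\delta')\mu_S$ deviation with $\delta'$ only infinitesimally smaller than $\delta$; choosing $\eta$ small, we can replace $\delta$ by any $\delta'<\delta$ and the constant $24$ in the exponent will absorb the loss.

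The second step is the standard multiplicative Chernoff bound: for $X\sim\bin(n,p)$ with mean $\mu$ and $0<\delta'\le 1$,
$$\p{|X-\mu|\ge \delta'\mu}\le 2\exp\pran{-\delta'^2\mu/3}.$$
Plugging in $\mu_S\ge \gamma^2(\log n)(1-o(1))/(4d^2)$, which uses the hypothesis $r_n\ge \gamma\sqrt{\log n/n}$, the per-box failure probability is at most $2\exp\pran{-\gamma^2\delta'^2(\log n)(1-o(1))/(12d^2)}\le 2n^{-\gamma^2\delta^2/(24d^2)}$ for $n$ large enough. The third step is a union bound over the $k^2d^2$ boxes of the cell, which yields
$$\pc{Q \text{ is not $\delta$-good}} \le 2 k^2d^2\, n^{-\gamma^2\delta^2/(24d^2)} \le 2(mkd)^2 n^{-\gamma^2\delta^2/(24d^2)},$$
since $m^2\ge 1$, giving the first claim.

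For the second claim, I would union bound once more over the $m^2$ cells, which again only costs a factor absorbed into $(mkd)^2$. Since $m=O(1/r_n)=O(\sqrt{n/\log n})$, one has $(mkd)^2=O(n/\log n)$, so the bound is
$$O\!\pran{\frac{n}{\log n}}\cdot n^{-\gamma^2\delta^2/(24d^2)},$$
and the hypothesis $\gamma^2>24d^2/\delta^2$ makes the exponent strictly less than $-1$, hence the bound is $o(1)$. There is no real obstacle here; the only mildly fiddly point is the bookkeeping between $r_n$ and $r_n'$, which I would handle with the auxiliary $\eta$ trick above so that the stated constant $1/24$ (a comfortably loose version of the Chernoff constant $1/12$) is reached cleanly.
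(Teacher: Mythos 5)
Your proof is correct and follows essentially the same route as the paper: apply the multiplicative Chernoff bound box by box, union-bound over the $k^2d^2$ boxes in a cell, and then over all $(mkd)^2$ boxes for the ``in particular'' clause. You are actually a bit more careful than the paper about the mismatch between the true mean $nr_n'^2/(4d^2)$ and the threshold $nr_n^2/(4d^2)$ appearing in the definition of $\delta$-goodness (the paper handles the $r_n$ versus $r_n'$ gap only through the crude $\sqrt2\,r_n'\ge r_n$ inequality used to relate $\mu_S$ to $\gamma^2\log n$, silently absorbing the deviation-parameter shift into the loose constant $24$); your explicit $\eta$-slack makes this step airtight without changing the argument.
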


See the Appendix for the proof.

\section{An overwhelming giant: Structure of the proof}\label{sec:roadmap}

\subsection{General approach and setting}\label{sec:gen_approach}

Our approach consists in exhibiting a large connected component by 
exposing the edges, or equivalently the choices of the points, in a specific order
so as to maintain a strong control on what happens. 
The general strategy has two phases: first a \emph{push}-like phase in which 
we aim at exposing edges that form a connected graph that is 
fairly dense almost everywhere; we call this subgraph the \emph{web}. 
Then, we rely on a \emph{pull}-like phase in which we expose edges from the points 
that are not yet part of the web and are trying to hook up to it. 

\medskip\noindent
\textsc{the push phase.}\ 
The design and analysis of the push phase is the most delicate part of the construction. 
It is difficult to build a connected component with positive density while keeping some 
control on the construction. For instance, following the directed edges in $\Gamma_n^+$
from a single point, say $x$, in a breadth-first manner produces an exploration of $\Gamma_n^+$ 
that \emph{resembles} a branching process. That exploration needs to look at  
$\Omega(\log\log n)$ neighborhoods of $x$ in order to reach the $\epsilon \log n$ 
total population necessary to have positive density in at least one ball of radius $r_n$. 
However, by the time the $\Theta(\log\log n)$ neighborhoods have been explored, 
the spread of the cloud of points discovered extends as far as $\Theta(\log \log n)$ away
from $x$ in most directions: in other words, doing this would 
waste many edges, and make it difficult to control the dependence between the 
events of reaching positive density in different regions of the square. 
An important consequence is that is it not reasonable to expect that two points 
that are close are connected locally: we will prove that points are indeed connected 
with high probability, but the path linking them does wander far away from them. 

To take this observation into account, in a first stage we only build a sort of skeleton 
of what will later be our large connected component. That skeleton, which we call the web
\emph{does not} try to connect points locally and its aim is to provide an almost ubiquitous 
network to which points will be able to hook up easily. The construction of this 
web uses arguments from percolation theory and relies on the subdivision of the unit square 
into $m^2$ cells described above. It is crucial to keep in mind that for the construction we 
are about to describe to work the web should be connected in a directed sense.

The cells define naturally an $m\times m$ grid as 
a square portion of $\Z^2$, which we view as a directed graph.
To avoid confusion with vertices and edges of the graph $\Gamma_n(r_n,\xi)$,
we call the vertices of the grid graph \emph{nodes} and its edges \emph{links}. 
More precisely, let $[m]:=\{1,\dots, m\}$. 
We then consider the digraph $\Lambda_m^+$ on the node set 
$[m]^2$ whose links are the pairs $(u,v)$ whose $\ell_1$ distance equals one;
the oriented link from $u$ to $v$ is denoted by $uv$.  
Write $E_m^+$ for the link set, so that $\Lambda^+_m=([m]^2, E_m^+)$. 

The construction of the web uses two main building blocks: 
we define events on the nodes and the links of $\Lambda_m^+$ such that 
\begin{itemize}
\item a \emph{node event} is the event that, starting from a vertex
in the central box of the cell, if one tracks the selected neighbors
of the vertex staying in the cell, then the selected neighbors of 
these neighbors in the cell and so on up to a number of hops $k^2$, 
then the resulting component populates the cell in a uniform manner 
-- see Proposition \ref{pro:vertex_proba} for the precise statement;
\item a \emph{link event} allows the connected component built within the cell to 
propagate to a neighboring cell. 
We  show that both node events and link event happen with high
probability. See Lemma~\ref{lem:diedge}
\end{itemize}
It is important to emphasize that in proving that node and link events
occur with high probability, we make use of a coupling with suitably
defined branching random walks that are independent of the precise
location of the points at which such events are rooted. 
Although this does not suffice to make all node and link events become independent,
it helps us control this dependence and allows 
us to set up a joint site/bond percolation argument on $\Z^2$ that proves 
the existence of a directed connected component that 
covers most cells. The node and link events are described precisely and the bounds on 
their probabilities are stated in Sections~\ref{sec:node} and~\ref{sec:edge}, 
respectively.
Finally, in Section~\ref{sec:sketch_layer}, we show how to combine the node and 
link events in order to construct the web using a coupling with a percolation process. 
The proof relating to the estimates of the probabilities of the node events is rather
intricate, and we present them in Section~\ref{sec:vertex}.

\medskip
\noindent\textsc{the pull phase.}\ 
The analysis of the pull phase relies on proving that any vertex
not yet explored in the process of building the web is in the same component as the web,
with high probability. In order to prove this, one may construct another
web starting from such a vertex, which succeeds with high probability
 by the arguments of the push phase.
Then it is not difficult to show that the two webs are connected with 
high probability.
The details are developed 
in Section~\ref{sec:pull_phase}.

\subsection{Populating a cell: node events}
\label{sec:node}

In proving the existence of the web (i.e., a connected component 
that has vertices in almost every cell), we fix $\delta>0$ and any point set $\bX$ 
for which every box is $\delta$-good and work conditionally. 
Thus, the only randomness comes from the choices of the edges.
We reveal edges of the digraph
$\Gamma^+_n$ in a sequential manner. In order to make sure that
certain events are independent, once the $\xi_i$ out-edges of a
vertex $X_i$ have been revealed, the vertex becomes \emph{forbidden} and 
excluded
from any events considered later. We  keep control of the number
and density of forbidden
points during the entire process.

In Section \ref{sec:sketch_layer} we describe the order in which cells 
are examined. In this section we look into a single cell $Q$ and describe an
event ---the so-called ``node event''--- that only depends on edge choices 
of vertices within the cell. All we need is a starting vertex
$x\in \bX$ in the central box $C(Q)$ of the cell and a set $F$ of forbidden vertices.
Both $x$ and $F$ may depend on the evolution of process before the cell
is examined. However, by construction (detailed below), we guarantee that the
set of forbidden vertices $F$ only has a bounded number of elements in each box, 
and therefore does not have a significant impact on the outcome of the node event.
Similarly, the starting vertex $x$ originates from an earlier stage 
of the process but its exact location is unimportant, again by the definition of the 
node event, as detailed below.




Consider a cell $Q$ and a point $x\in \bX\cap Q$. For $i\ge 0$, let $\tilde \Delta_x(i)$ 
denote the 
collection of points of $\bX\cap Q$ that can be reached from $x$ by following $i$ directed 
edges of $\Gamma_n^+$ without ever using a point lying outside of $Q$. 
Let $F\subset \bX$ denote the set of \emph{forbidden} points containing the $y\in \bX$ 
whose choices have already been exposed.
Let $\Delta_x(i)$ be the subset of points of $\tilde \Delta_x(i)$ that can be 
reached from $x$ without ever using a point in~$F$. 

Recall that the cell $Q$ is partitioned into $k^2d^2$ square \emph{boxes}
of side length $r_n'/(2d)$ and that $\cS=\cS(Q)$ denotes the collection 
of boxes of $Q$. 
The next proposition shows that, with high probability,
any cell $Q$ with starting point $x\in \bX\cap Q$ 
is such that $\Delta_x(k^2)$ populates $Q$ in the following way: 
for every $S\in \cS(Q)$, we have $|\Delta_x(k^2)\cap S|\ge \Ec{\xi}^{k^2/2}$. 
We refer to this event as $N_x(Q)$ and the corresponding local 
connected component is called a \emph{bush}. The proof is delayed 
until Section~\ref{sec:vertex}.

\begin{proposition}\label{pro:vertex_proba}
For a cell $Q$ and vertex $x\in \bX\cap C(Q)$, define the \emph{node event} 
\[
N_x(Q) = \left\{ \forall S\in \cS(Q): |\Delta_x(k^2) \cap S| \ge \Ec{\xi}^{k^2/2}
    \right\}~.
\]
\nomenclature[N]{$N_x(Q)$}{The node event in cell $Q$ started from $x\in C(Q)$}
For every $\eta>0$, there exist constants $d_0,k_0,n_0\ge 1$, and $\delta_0>0$ such 
that, provided that the cell is $\delta$-good and that 
$\sup_{S\in \cS(Q)}|F\cap S|<\delta_0 n r_n^2$, then
for all $k\ge k_0$, $d\ge d_0$, $n\ge n_0$ and for all $x\in \bX\cap C(Q)$, 
$$\pc{N_x(Q)\,|\,\bX} \ge 1-\eta~.$$
\end{proposition}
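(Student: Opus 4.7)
My plan is to couple the in-cell exploration $\Delta_x(\cdot)$ with a continuous branching random walk (BRW) $Z$ on the torus, started at $x$, in which every particle has $\xi$ offspring placed uniformly in the disk of radius $r_n$ around it. By $\delta$-goodness, the empirical distribution of $\bX$ in any disk $B(y,r_n)$ differs from the uniform measure on $B(y,r_n)$ by a factor $1\pm O(\delta)$, so the true procedure of sampling $\xi$ out-neighbors uniformly from $\bX\cap B(\cdot,r_n)$ couples with the BRW offspring at per-step cost $O(\delta)$. With a suitable one-to-one thinning, $\Delta_x$ stochastically dominates a sub-BRW whose effective offspring mean is $(1-O(\delta))\Ec{\xi}$, still strictly greater than one for $\delta$ small.

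Next I would analyse this BRW on two fronts. Population: since $\xi\ge 1$ almost surely the process never dies, and the martingale $W_j := Z_j/\Ec{\xi}^{j}$ has variance bounded uniformly in $j$ (using $\xi\le\kappa$), so by Chebyshev the generation-$k^2$ population exceeds $c\Ec{\xi}^{k^2}$ with probability at least $1-\eta/3$, for a constant $c=c(\eta,\xi)>0$. Spatial profile: each particle's position at generation $k^2$ is a sum of $k^2$ i.i.d.\ uniform-disk displacements with coordinate variance $r_n^2/4$, so the CLT gives a Gaussian spread of standard deviation $\sigma\sim kr_n/2$, comparable to the cell half-side $kr_n/4$. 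A local limit theorem then furnishes, for each box $S\in\cS(Q)$, an expected count of at least $c'\Ec{\xi}^{k^2}/(k^2d^2)$ particles landing in $S$; a second-moment argument, decomposing pairwise covariances by most recent common ancestor in the genealogical tree, concentrates the per-box count around this expectation. For $k$ large, $c'\Ec{\xi}^{k^2}/(k^2d^2)$ exceeds $\Ec{\xi}^{k^2/2}$ by a super-polynomial factor, so a union bound over the $k^2d^2$ boxes and over the concentration events yields $N_x(Q)$ with probability at least $1-\eta$.

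Finally, three sources of coupling loss have to be patched. (i) Offspring leaving $Q$: rescaling so that the step is unit, this is a killed random walk on a square of side $\sim k$, whose killing rate is controlled by the first Dirichlet eigenvalue $1+O(1/k^2)$; the total multiplicative loss over $k^2$ generations is $O(1)$. (ii) Offspring landing on forbidden points: under $\sup_S|F\cap S|<\delta_0 nr_n^2$ this happens at rate $O(\delta_0)$ per step, contributing another $O(1)$ factor provided $k_0^2\delta_0$ is chosen small. (iii) ``Birthday'' collisions between two distinct BRW particles landing on the same point of $\bX$: since the BRW count per box is bounded in $n$ while $|\bX\cap S|\sim \log n/d^2\to\infty$, these collisions are $o(1)$-probable and the distinct count in each box matches the BRW count up to lower order. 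I expect the hard part to be the spatial concentration step: a genuinely simultaneous lower bound over all $k^2d^2$ boxes requires fairly delicate control of sibling correlations in the killed BRW, especially near the boundary of $Q$ where the Green's function of the killed walk is not translation invariant and where the naive second-moment bound degrades.
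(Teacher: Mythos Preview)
Your overall strategy---couple $\Delta_x$ with a branching random walk and show the latter populates every box---is exactly the route the paper takes. The technical implementation, however, differs in two places, and in both your version has a real gap.

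\textbf{Extinction.} You analyse the unthinned BRW $Z$ (offspring exactly $\xi\ge1$, which indeed never dies) and then propose to treat the three coupling losses (i)--(iii) as ``$O(1)$ multiplicative'' corrections. But those $O(1)$ factors are statements about \emph{expectations}; the sub-BRW that actually dominates $\Delta_x$ has offspring that can be zero (a child killed at the boundary, or landing on a forbidden point, or thinned for $\delta$-goodness), so it \emph{can} die out, and this happens with probability bounded away from zero uniformly in $k$. No post-hoc multiplicative accounting on $Z$ recovers from extinction of the sub-process. The paper deals with this by working from the start with the thinned offspring $\zeta=\bin(\xi,\alpha)$, discretised so that the step is exactly uniform on a fixed set $\cA$ of boxes, and bounding the extinction probability $q$ of the resulting Galton--Watson tree directly (Lemma~\ref{lem:bound_extinct} shows $q\to0$ as $\alpha\to1$); the final bound on $\pc{N_x(Q)^c\,|\,\bX}$ is $2q$, which is then made smaller than $\eta$ by choice of $\delta,d,n$.

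\textbf{Spatial concentration.} You propose a second-moment bound on the per-box count and correctly flag that sibling correlations in the \emph{killed} BRW near $\partial Q$ make this delicate. The paper sidesteps second moments entirely with a two-stage ``many independent tries'' argument: first run the thinned BRW for $\lfloor\mu k\rfloor$ generations with $\mu<1/(8d)$, so that every surviving individual is automatically within distance $k/4$ of the centre (the displacement is deterministically bounded) and, on survival, there are at least $\beta\Ec{\zeta}^{\mu k}$ of them by Doob's limit law. Then, for each such ``decent'' individual, a single-walk estimate (Lemma~\ref{lem:fill-up-one}, proved via the KMT/Za\u{\i}tsev strong embedding of the walk by Brownian motion) shows that a random walk started well inside the cell hits any prescribed box $S$ at time $k^2-\mu k$, while never leaving the cell, with probability at least $k^{-c}$. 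With exponentially-in-$k$ many independent tries each succeeding with polynomial-in-$k$ probability, at least one succeeds with probability $1-o(1)$, and that one already contributes $\Ec{\zeta}^{2k^2/3}\ge\Ec{\xi}^{k^2/2}$ individuals in $S$. No second-moment computation and no boundary Green's function are needed.

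In short: to make your plan go through you would need (a) an explicit survival argument for the \emph{thinned, killed} process, not just for $Z$, and (b) either a genuine variance bound for the killed BRW uniformly over boundary boxes, or a switch to the paper's ``exponentially many decent individuals each with polynomial success probability'' device, which is much cleaner.
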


We note that although it may seem that the events $N_x(Q)$, for distinct cells $Q$ 
would be independent (they depend on the choices of disjoint sets of vertices), it is not 
the case. It will become clear later that the events do interact through the 
set $F$, which will be random, but that this dependence can be handled.

\subsection{Seeding a new cell: link events}\label{sec:edge}

We define an event that permits us to extend a bush confined to a cell $Q$ and 
to find a directed path from it to a point $x'$ in the central box of a
neighboring cell $Q'$.

For a given cell $Q$, the set of $(kd)^2$ boxes $\cS(Q)$ is naturally indexed by 
$$\{-\flr{kd/2}, \dots, \flr{kd/2}\}^2.$$ 
Among the boxes $S\in \cS(Q)$, 
let $\cI(Q)$ 
\nomenclature[I]{$\cI(Q)$}{The collection of four seed/infection boxes of the cell $Q$} 
denote the collection of the four boxes that correspond to the
coordinates $(-\flr{kd/2}, 0)$, $(\flr{kd/2}, 0)$, $(0, -\flr{kd/2})$ and $(0, \flr{kd/2})$ 
(Figure \ref{fig:discretization}).
It is from points in these boxes that we try to ``infect'' neighboring cells, and we refer to 
these boxes as \emph{seed boxes} or \emph{infection boxes}. 
For two adjacent cells $Q$ and $Q'$, we let $I(Q,Q')$
\nomenclature[I]{$I(Q,Q')$}{The infection box in $Q$ on the face shared by $Q$ and $Q'$} 
denote the seed box lying in $Q$ against the face shared by $Q$ and $Q'$. 
Suppose, as before, that there is a set $F\subset \bX$ of forbidden points. 
For a point $y\in \bX\cap I(Q,Q')$, let $\Delta^\circ_y(i)$ denote the points of $\bX$ that 
can be reached from $y$ using $i$ directed edges of $\Gamma_n^+$ 
without using any point lying outside of $Q'$ or in $F$, except for $y$ itself. 
Let $J_y(Q,Q')$ be the event that $\Delta_y^\circ(\cel{kd/2})$ contains a point lying in 
the central box of $Q'$: 
$$J_y(Q,Q')=\{\Delta_y^\circ(\cel{kd/2})\cap C(Q')\ne \varnothing\}.$$
Then, for $R\subset \bX\cap I(Q,Q')$, we let $J_R(Q,Q')=\cup_{y\in R} J_y(Q,Q')$.
The event $J_R(Q,Q')$ is called a \emph{link event}.


\begin{lemma}\label{lem:diedge}
Let $Q$ and $Q'$ be two adjacent cells. Suppose that $Q'$ is $\delta$-good 
for $\delta\in(0,1/4)$ and that 
$\sup_{S\in \cS(Q)} |F\cap S|<\delta_0 n r_n^2$ for $\delta_0<1/(4d)^2$. 
Then, for every $k$, and $d$ there exists $n_0$ such that for every $n\ge n_0$,
and for any set $R\subseteq \bX \cap I(Q,Q')$, we have 
$$\pc{J_R(Q,Q')\,|\,\bX} \ge 1 - \exp\pran{- \frac{|R|}{(10\beta d^2)^{kd}}}~,$$
where $\beta = (1+\delta) (1/(2d) + (k/(16d^2))$.
\end{lemma}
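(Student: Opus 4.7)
My strategy is to lower-bound the single-seed probability $\pc{J_y(Q,Q')\mid\bX}$ by $p:=(10\beta d^2)^{-kd}$, and then amplify across the $|R|$ seeds by a sequential exposure that produces (almost) independent Bernoulli trials.

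\emph{Per-seed estimate.} For a seed $y\in R$, I fix a chain of adjacent boxes $I(Q,Q')=S_0,S_1,\ldots,S_L=C(Q')$ with $S_j\subseteq Q'$ for $j\geq 1$ and $L=\lceil kd/2\rceil$ (the box-graph distance in $Q'$ from the shared face to the central box). Because the box side is $r_n'/(2d)\leq r_n/(2d)$, any vertex of $S_{j-1}$ has $S_j$ inside its ball $B(\cdot,r_n)$. I then expose a greedy path $X_0=y,X_1,\ldots,X_L$ one hop at a time, asking at step $j$ that at least one of the $\xi\geq 1$ uniform out-choices of $X_{j-1}$ in $\bX\cap B(X_{j-1},r_n)$ falls in $\bX\cap S_j\setminus F$. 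Sampling without replacement yields a conditional success probability of at least
\[\frac{|\bX\cap S_j\setminus F|}{|\bX\cap B(X_{j-1},r_n)|}.\]
By $\delta$-goodness of $Q'$, $|\bX\cap S_j|\geq (1-\delta)nr_n^2/(4d^2)$, and the bounds $\delta<1/4$ and $\delta_0<1/(4d)^2$ keep the numerator at least $nr_n^2/(16d^2)$. Summing $\delta$-good box populations over the $O(d^2)$ boxes that $B(X_{j-1},r_n)$ intersects---with an extra contribution when $X_{j-1}$ lies close to a cell boundary, which is where the $k$-dependence in $\beta$ originates---bounds the denominator by $\beta nr_n^2$. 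Hence the single-step probability is at least $1/(10\beta d^2)$; iterating across the $L\leq kd$ steps (where successive steps use independent out-choices of distinct vertices) gives $\pc{J_y(Q,Q')\mid\bX}\geq (10\beta d^2)^{-kd}=p$.

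\emph{Amplification.} I process the seeds $y_1,\ldots,y_{|R|}$ one by one. For each $y_i$ I attempt the greedy construction, using only out-choices of vertices untouched by previous attempts; I declare the attempt failed if it would revisit a previously-used vertex. Because out-choices at disjoint vertices are independent, the success events for distinct seeds are then exactly independent Bernoulli($p'$) with $p'\geq p$. Each attempt touches at most $L+1\leq kd+1$ vertices, so the cumulative used mass per box is at most $|R|\leq (1+\delta)nr_n^2/(4d^2)$, which perturbs the single-step numerator by only a constant factor; the constant $10$ in $10\beta d^2$ is chosen to absorb this slack (together with the gap between $L=\lceil kd/2\rceil$ and the exponent $kd$ in the statement). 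We conclude
\[\pc{\overline{J_R(Q,Q')}\mid\bX}\leq (1-p)^{|R|}\leq \exp\!\left(-\frac{|R|}{(10\beta d^2)^{kd}}\right).\]

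\emph{Main obstacle.} The delicate point is the interaction between seeds: different $J_y(Q,Q')$ share random out-choices whenever the corresponding paths traverse a common vertex, so they are not a priori independent. The ``kill on revisit'' convention is the standard device to restore clean independence, and the generous exponent $kd$ (compared with the true path length $\lceil kd/2\rceil$) is exactly what absorbs the resulting constant-factor loss.
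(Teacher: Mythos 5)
Your proposal follows essentially the same route as the paper's: fix a straight chain of $h=\lceil kd/2\rceil$ adjacent boxes from $I(Q,Q')$ to $C(Q')$, lower-bound the per-step probability by comparing the $\delta$-good box population to the ball population bound $\beta n r_n^2$, and amplify over the seeds $y\in R$ by sequential exposure yielding a binomial domination; the paper additionally streamlines the per-seed path by always following only the first out-choice $Y_1$. One small inaccuracy worth flagging: the paper does \emph{not} use the exponent $kd$ to absorb revisit losses---it is simply the trivial weakening $(10\beta d^2)^{-h}\ge(10\beta d^2)^{-kd}$ (valid since $10\beta d^2\ge 1$), while revisit dependence is handled by conditioning on the $\sigma$-field $\cG_y^-$ of previously exposed partial paths, which gives stochastic domination of the success count by $\bin(|R|;\eta)$ rather than the literally i.i.d.\ $\Ber(p')$ trials you assert (your per-seed success probability is not constant across the sequential exposure, only uniformly bounded below).
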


\begin{proof}
Write $h=\cel{kd/2}$. 
Let $L_{0}=I(Q,Q'), L_{1},\dots, L_{h}=C(Q')$ denote the sequence of boxes on the straight 
line from $I(Q,Q')$ to $C(Q')$. For $J_R(Q,Q')$ to occur, it suffices that for some $y\in R$,
one has $|\Delta_y^\circ(i)\cap L_i|\ge 1$, for every $i=1,\dots, h$; call $E_y^\circ$ the 
corresponding event. The $E_y^\circ$, $y\in R$, are not independent because the sets 
$\Delta_y^\circ(i)$, $i\ge 1$, might not be disjoint.  
However, on $\{\cap_{y\in R}\Delta_y^\circ(i)= \varnothing\}$, the events $E_y^\circ$, $y\in R$, 
are independent. 
Consider the ordering of the points in $R$ induced by the ordering in $\bX$, 
and write $x< x'$ if $x=X_i$ and $x'=X_j$ for $i<j$. 
To simplify the proof, we only consider a single path from any given point $y\in R$. 
Consider the path defined by $P_0(y)=y$, and for $i\ge 1$, $P_i(y)=Y_1(P_{i-1}(y))$;
this is well defined since $\xi_i\ge 1$ with probability one. 
Let $E_y$ be event that for every $i=1,2,\dots, h$, one has $P_i(y)\in L_i$. 
Then we have $E_y\subset E_y^\circ$.

Note that since $Q'$ is $\delta$-good, for any box $S\in \cS(Q)$ we have:
$$|\bX \cap S |\ge \frac{(1-\delta)nr_n^2}{4d^2}
\qquad \text{and}\qquad 
|S\cap \bX \cap F^c|\ge \frac{nr_n^2}{8d^2}$$ 
since $\delta<1/4$ and $\delta_0<1/(4d)^2$. 
Furthermore, at most $|R|h$ of the points of $\bX\cap S \cap F^c$ 
lie in $\cup_{y\in R} P_i(y)$, for some $i=1,2,\dots, h$. 
Now, for $y\in R$, let $\tau_y:=\inf\{i\ge 1: P_i(y) \not\in L_i\}$. 
Let $\cG_{y}^-$ be the sigma-algebra generated by $\{P_i(y): 0 \le i< \tau_y\}$, $y'<y$. 
Since every cell is $\delta$-good, for every $x\in [0,1]^2$, we have 
$|\bX \cap B(x,r_n)|\le \beta n r_n^2$.
Then, for every $y\in R$, and all $n$ large enough, we have
\begin{equation}\label{eq:seed_one-point}
\Cprob{E_y}{\cG_{y}^-}
\ge \pran{\frac 1 {10 \beta d^2}}^h
\ge \pran{\frac 1 {10 \beta d^2}}^{kd}=:\eta,
\end{equation}
where the last expression serves as the definition for the constant $\eta$. 
Here we used the fact that $10 \beta d^2 \ge 1$.
It follows that $|\{y\in R: E_y\}|$ dominates a binomial random variable $\bin(|R|;\eta)$
with parameters $|R|$ and $\eta$:
\begin{align*}
\pc{\exists y \in R: E_y^\circ}
\ge \p{\exists y\in R: E_y} 
&\ge \pc{\bin(|R|; \eta)>0} \\
&\ge 1 - e^{-\eta |R|}.
\end{align*}
Replacing $\eta$ by its expression in \eqref{eq:seed_one-point} yields the claim.
\end{proof}

\subsection{Building the web: the percolation process}
\label{sec:sketch_layer}

In this section, percolation arguments are used to show how the node and link events 
can be used to build the ``web'', a connected component that visits most
cells. The construction is based on an algorithm 
to decide which edges to expose depending on what we have 
seen so far. Once again, fix a point set $\bX$ such that every cell 
is $\delta$-good and work conditionnally. 

\medskip
\noi\textsc{defining a partial percolation configuration on the square grid of cells.}\ 
We encode an exploration process on the digraph $\Lambda^+_m$ of cells by defining a 
\emph{partial} and \emph{joint site/bond} percolation process using the 
node and link events defined above. 
At the same time, we keep track of the set of 
forbidden vertices $F$ discussed in Sections~\ref{sec:edge} and~\ref{sec:vertex}. 

For $u\in [m]^2$, we let $Q_u\subset [0,1]^2$ denote the corresponding cell.
The nodes of $[m]^2$ are ordered lexicographically: 
for $u=(u_1,u_2)$, $v=(v_1,v_2)$ we write $u\preceq v$ if $u_1\le v_1$ or 
if $u_1=v_1$ and $u_2\le v_2$. 
We proceed with an exploration process in the lexicographic order, maintaining,
at every step $i=0,1,2,\ldots$ of the process,
a partition of $[m]^2$ into three sets of nodes $[m]^2=A_i\cup E_i\cup U_i$, where 
we call the nodes in $A_i$ \emph{active}, the ones in $E_i$ \emph{explored}, and 
those of $U_i$ \emph{unseen}. Initially, all the nodes are unseen and therefore 
$U_0=[m]^2$, $A_0=\varnothing$, $E_0=\varnothing$. 
The sets $A_i, E_i, U_i$, $i\ge 0$, are designed in such a way that, 
\begin{itemize}
\item at any time $i\ge 0$, any node $u\in A_i$ has a distinguished vertex $x_u$
in the center box $C(Q_u)$ for which we can check if 
the node event $N_{x_u}(Q_u)$ (defined in Proposition \ref{pro:vertex_proba}) 
occurs; the set of forbidden vertices that is used 
to assess this event is $F_i$ to be defined shortly. 
\item The nodes $u\in E_i$ are the ones that have been active from some time 
$j<i$ and for which the node event $N_{x_u}(Q_u)$ has already been 
observed. 
\end{itemize}

We now move on to the precise description of the algorithm and of the sets 
$A_i, E_i, U_i\subset [m]^2$, and $F_i\subset \bX$. Initially, we set $F_1=\varnothing$. 
Then we proceed as follows, for $i\ge 1$. If $E_i=[m]^2$, then we have already 
``tested'' a node event for each node and we are done, and we now suppose that
$E_i\ne [m]^2$. Then, there must be some node in either $A_i$ or $U_i$.

{\bf (i.)}\ Suppose first that $A_i\ne \varnothing$. Then, let $u_i$ be the node of 
$A_i$ that is lowest in the lexicographic order. By construction, 
there is a distinguished vertex $x_{u_i}\in C(Q_{u_i})$.
Say that the node $u_i$ is \emph{open} and set $\tilde \sigma(u_i)=1$ 
if the node event $N_{x_{u_i}}(Q_i)$ succeeds. 
If this is the case, all four seed boxes in $Q_{u_i}$ contain a set of points 
of the bush constructed in $Q_{u_i}$ of cardinality at least $\Ec{\xi}^{k^2/2}$ 
that are all connected to $x_{u_i}$ within $Q_{u_i}$. 
Consider all the oriented links $u_iv$, where $v\in U_i$, and let 
$R_{u_iv}$ be the set of points that are lying in the 
seed box $S(Q_{u_i},Q_v)$ of $Q_{u_i}$ that is adjacent to $Q_{v}$. For any such link $u_iv$, 
we declare the oriented link open and set 
$\tilde \sigma(u_iv)=1$ if the link event $J_{R_{u_iv}}(Q_{u_i}, Q_v)$ 
(defined just before Lemma \ref{lem:diedge}) succeeds. 
(Note that we liberally use the notation $\tilde \sigma(\cdot)$ to indicate either
openness of a node $u$ by $\tilde\sigma(u)$ or the openness of an oriented link $uv$
by $\tilde\sigma(uv)$).

Let $V_i=\{v\in U_i: \tilde \sigma(u_iv)=1\}$. 
For every $v\in V_i$, since $J_{R_{u_iv}}(Q_{u_i},Q_v)$ succeeds, we have, by construction, 
a non-empty set of points of the center box $C(Q_v)$ that are connected to $R_{u_i,v}$ 
by directed links in $\Gamma_n^+$; we let $x_v$ be the one of these points that has the 
lowest index in $\bX$. Then, update the sets by putting $E_{i+1}=E_{i} \cup \{u_i\}$, 
$A_{i+1}=A_i \cup V_i \setminus \{u_i\}$, $U_{i+1}=U_i \setminus V_i$. 
As for the set of forbidden vertices, let $f_{i+1}$ be the collection of points $X_u\in \bX$ 
whose choices $Y_j(X_u)$, $1\le j\le \xi_u$, have been exposed when determining the 
node event $G_{x_{u_i}}(Q_{u_i})$ and the potential following link events. 
Then, let $F_{i+1}=F_i \cup f_{i+1}$.

{\bf (ii.)}\ If, on the other hand, $A_i=\varnothing$, then $U_i\ne \varnothing$.
Note that if this happens, it means that we have not succeeded in finding 
a point $x\in C(Q_{u_i})$ that is connected to the points previously explored 
and we need to start the exploration of a new connected component of $\Gamma_n^+$. 
Let $u_i\in U_i$ be the node with lowest lexicographic order. Then, 
set $A_{i+1}=\{u_i\}$, $E_{i+1}=E_i$ and $U_{i+1}=U_i\setminus \{u_i\}$. 
We then let $x_{u_i}$ be the point of $\bX \cap C(Q_{u_i})$ that has the 
lowest index in $\bX$. Such a point exists by the assumption 
of $\delta$-goodness and because the number of forbidden points in 
each cell is bounded (see Lemma \ref{lem:forbidden-points} below).

Note that the distinguished point $x_u$ of a cell $Q_u$ is chosen when 
the corresponding vertex is activated, which happens once and only once for 
every node $u\in [m]^2$. 

In order to use Proposition~\ref{pro:vertex_proba} and Lemma~\ref{lem:diedge}
for estimating the probability of node events and link events,
we need to make sure that the number of forbidden points stays 
under control. 

\begin{lemma}\label{lem:forbidden-points}
If $k$ is sufficiently large, then
for every cell $Q$, during the entire process, we have
$$|F\cap Q |\le \kappa^{2 k^2}~.$$
\end{lemma}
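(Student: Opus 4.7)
The plan is to enumerate every operation of the push phase that can expose the outgoing choices of a vertex lying in $Q$, and to bound each contribution using only the fact that $\xi_u \le \kappa$ almost surely. Recall that a vertex is added to $F$ precisely when its $\xi_u$ outgoing choices are exposed, and this happens at most once per vertex, so $|F \cap Q|$ is nothing but the number of vertices of $\bX \cap Q$ that ever get their choices revealed during the execution of the algorithm of Section~\ref{sec:sketch_layer}.

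First I would inspect the algorithm and check that the only operations which expose the choices of a vertex in $\bX \cap Q$ are: (a) the node event rooted at the distinguished vertex $x_{u_Q}$ of $Q$, whose exploration is confined to $Q$ and proceeds for at most $k^2$ hops; and (b) an incoming link event from a neighboring cell $Q''$, which starts from a set $R \subseteq I(Q'', Q) \subseteq Q''$ and follows directed paths of length at most $\lceil kd/2 \rceil$ whose intermediate vertices lie in $Q$. Outgoing link events from $Q$ towards a neighbor $Q'$ touch only the seed points in $I(Q,Q') \subseteq Q$, but those are already bush vertices of $Q$, hence already in $F$ from (a), and contribute no new vertices of $Q$ to $F$. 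Case (ii) of the algorithm merely selects a starting vertex and exposes no choices.

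Using the degree bound $\xi \le \kappa$, the depth-$k^2$ out-subtree rooted at $x_{u_Q}$ contains at most $1 + \kappa + \dots + \kappa^{k^2-1} \le \kappa^{k^2}$ vertices whose choices get exposed, so (a) contributes at most $\kappa^{k^2}$ to $|F \cap Q|$. For each incoming link event from a neighbor $Q''$, the seed set $R$ is a subset of the bush of $Q''$, hence $|R| \le \kappa^{k^2}$; from each $y \in R$ the portion of the depth-$\lceil kd/2 \rceil$ out-subtree that lies in $Q$ contributes at most $\kappa + \kappa^2 + \cdots + \kappa^{\lceil kd/2\rceil} \le \kappa^{\lceil kd/2\rceil + 1}$ additional vertices. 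Since $Q$ has at most four neighbors in $\Lambda_m^+$ and each of them is processed at most once by the algorithm (a node that enters $E$ is never revisited), at most four incoming link events ever target $Q$, so that
\[
|F \cap Q| \;\le\; \kappa^{k^2} \,+\, 4\,\kappa^{k^2} \cdot \kappa^{\lceil kd/2\rceil + 1} \;\le\; 5\,\kappa^{k^2 + \lceil kd/2\rceil + 1}.
\]

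To conclude, I would pick $k$ large enough (depending on $d$ and $\kappa$ only) that $\lceil kd/2\rceil + 1 + \log_\kappa 5 \le k^2$, which holds as soon as $k$ exceeds a constant depending on $d$; this makes the right-hand side at most $\kappa^{2k^2}$ and yields the lemma. The main point requiring care is the exhaustiveness of (a)--(b): one has to verify that no other step of the algorithm inadvertently exposes the choices of a vertex of $Q$, and in particular that although the starting vertex $x_{u_Q}$ may already be in $F$ when it is imported through an incoming link event, this only reassigns which category a given vertex is charged to and does not inflate the total count.
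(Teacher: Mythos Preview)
Your proposal follows essentially the same approach as the paper's proof: enumerate the operations that can expose choices of vertices in $Q$ (the node event of $Q$ and the link events touching $Q$), bound each contribution via the trivial branching bound $\xi\le\kappa$, and observe that the total is at most $\kappa^{2k^2}$ for $k$ large. The paper's own proof is in fact terser and does not separate incoming from outgoing link events; your more explicit accounting is a plus.

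There is, however, one small inaccuracy in your handling of the outgoing link events. You claim that the seed points in $I(Q,Q')$ are ``already bush vertices of $Q$, hence already in $F$ from (a).'' But the seed set $R$ consists of points of $\Delta_{x_{u_Q}}(k^2)$ lying in the seed box, i.e., points at depth \emph{exactly} $k^2$ in the node-event exploration. The node event only exposes the outgoing choices of vertices at depths $0,\dots,k^2-1$; the depth-$k^2$ vertices are merely discovered, not forbidden. Thus, when an outgoing link event starts from such a seed $y$, its choice $Y_1(y)$ is exposed for the first time and $y$ is newly added to $F\cap Q$. This contributes at most $4|R|\le 4\kappa^{k^2}$ additional vertices --- harmless for the final bound, but it should be counted rather than dismissed. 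With that correction (e.g., replacing the constant $5$ by $9$) your argument is complete and matches the paper's.
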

\begin{proof}
To reveal a node event $N_x(Q)$, one only needs to expose $\Delta_x(k^2)$ for a 
single point $x\in Q$. This requires to look at the edge choices of at most $k^2 \kappa^{k^2}$ 
vertices, all of which lie in $Q$. 
The points exposed during the evaluation of a link event account for 
a total of at most $4 \cdot k^2 \kappa^{k^2} \cdot k d \kappa^{kd}$. 
The claim follows easily. 
\end{proof}

\medskip
\noi\textsc{completing the percolation configuration.}\ 
Once the exploration 
process is finished, every node has been declared open or not, 
and we have assigned a value to every $\tilde\sigma(u)$, $u\in [m]^2$. 
\nomenclature[st]{$\tilde \sigma$}{A mixed site/bond percolation configuration where 
the bonds/sites are not independent}
However, we have not defined the status of all the 
oriented links $uv$. See Figure \ref{fig:OrientedClustersTorus}. 
In particular, $\tilde\sigma(uv)$ has only been
defined if $\tilde\sigma(u)=1$ and if $\tilde \sigma(v)$ had not been set to one before. 
For every oriented link $uv$, let $\theta(uv)$ be the indicator that 
a link event has been observed for $uv$. 
Let $H_m^+$ denote the open subgraph of $\Lambda_m^+$,  that consists of 
nodes $u$ and directed links $uv$ for which $\tilde\sigma(u)=1$ and $\tilde \sigma(uv)=1$, 
respectively.
A subset $K$ of nodes in $[m]^2$ is called an \emph{oriented connected component}
of $H_m^+$ if $\tilde \sigma(u)=1$ for all $u\in K$ and for all $u,v\in K$ there is
an oriented open path between $u$ and $v$, that is, a sequence 
$u=u_1,u_2,\ldots,u_\ell=v$ of nodes in $K$ such that $\tilde \sigma(u_iu_{i+1})=1$
for all $i=1,\ldots,\ell-1$.

\begin{figure}
\centering
\includegraphics[scale=1]{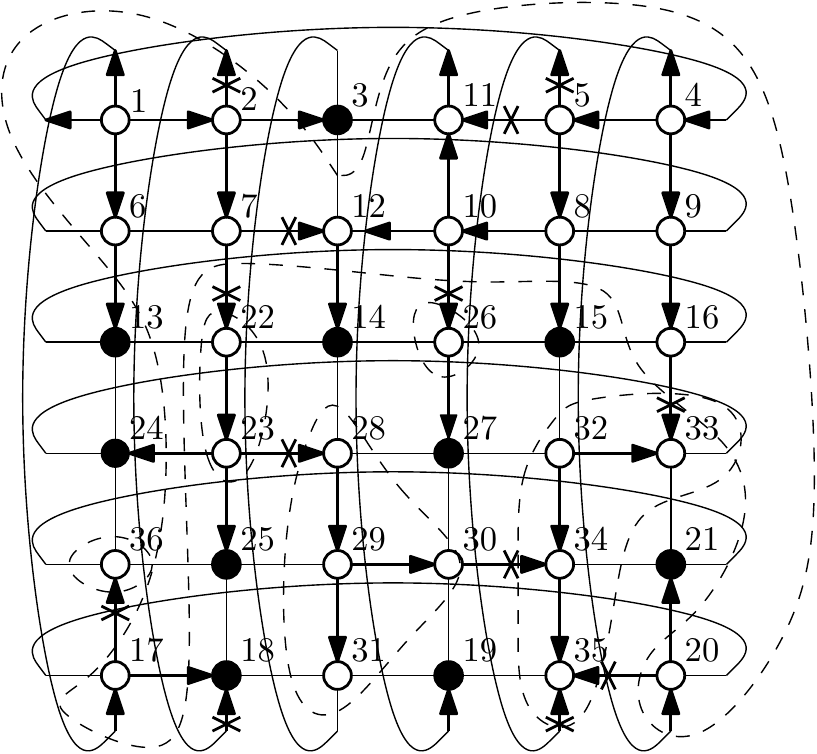}
\caption{\label{fig:OrientedClustersTorus}
The partial percolation configuration after exploring all node and link events
with the obtained oriented connected components. White nodes are those
for which the node event $N_x(Q)$ succeeds. Crossed arrows represent 
failed link events. The numbers near the nodes indicate the order in which the 
node events are tested. }
\end{figure}

In order to prove that $H_m^+$ contains an oriented connected component
containing most nodes ---and therefore proving the existence of the web---,
we embed $H_m^+$ in an \emph{unoriented} complete mixed site/bond percolation configuration 
on the digraph $\Lambda_m^+$. Then we  use results from the theory
of percolation to assert the existence of a connected component
containing most vertices.

In a general mixed site/bond percolation configuration, every node is open
independently with a certain probability $p$, and every undirected link is also
open independently with probability $q$.
Fix $\eta\in (0,1)$ and choose the parameters $k,d,\lambda$, and $\delta$
such that each node event occurs with probability at least $1-\eta$
and each (oriented) link event occurs with probability at least $1-\eta/2$.
In order to define the mixed site/bond percolation configuration,
we first assign states to the oriented links $uv$ for which 
$\theta(uv)=0$. 
Let $(\tilde \sigma(uv): uv\in E_m^+, \theta(uv)=0)$ be 
a collection of i.i.d.\ Bernoulli random variables with success probability $1-\eta/2$.
Now we declare an unoriented link $uv$ open if $\tilde\sigma(uv)=\tilde \sigma(vu)=1$.
Observe that although we have assigned a configuration 
$(\tilde\sigma(u), u\in [m]^2; \tilde \sigma(e), e\in E_m^+)$ to the digraph $\Lambda_m^+$, 
this collection of random variables is not independent. For instance, for any two nodes, 
the $\tilde \sigma(u)$ and $\tilde \sigma(v)$ are dependent for they interact through the 
set of $F$ of forbidden nodes which is random. However, 
\begin{align*}
\inf_{1\le i\le m^2} &\p{\tilde \sigma(u_i)=1\,|\, \bX,F_i}\ge 1-\eta\quad \text{and}\\
\inf_{1\le i\le m^2} \inf_{u_iv\in E_m^+} & \p{\tilde \sigma(u_iv)=1\,|\, \bX,F_i}\ge 1-\eta/2~,
\end{align*}
which implies that there exists two independent collections of i.i.d.\ Bernoulli random 
variables $(\sigma(u), u\in [m]^2)$, and $(\sigma(uv), uv\in E_m^+)$ with success probabilities
$1-\eta$ and $1-\eta/2$, respectively such that almost surely $\tilde \sigma(u)\ge \sigma(u)$ 
for $u\in [m]^2$ and $\tilde \sigma(uv)\ge \sigma(uv)$ for $uv\in E_m^+$. 
The configuration defined by $\sigma$ is a proper mixed site-bond percolation configuration.
\nomenclature[s]{$\sigma$}{A proper mixed site/bond percolation configuration}

By construction, in the configuration $\sigma$, every node and every unoriented link of $[m]^2$ 
is equipped with an independent Bernoulli random variable 
with success probability at least $1-\eta$.
Each node and each link is open if the corresponding Bernoulli variable
equals $1$. This is the \emph{mixed site/bond percolation} model 
considered, for example, by \citet{Ham80}. 
In such a configuration, we say that 
two nodes $u,v\in [m]^2$ are bond-connected in the configuration 
if there exists a sequence of open nodes $u=u_1,u_2,\dots, u_\ell=v$ 
for which every link $u_iu_{i+1}$, $1\le i<\ell$ is also open.
This equivalence relation naturally defines bond-connected components. 
Clearly, each bond connected component is also an oriented open 
component in $H_m^+$ and therefore it suffices to show that
the mixed site/bond percolation configuration has a
bond-connected component containing almost all nodes, with high probability.

In order to prove this, we use results of \citet{DePi1996a} for 
high-density site percolation by reducing the mixed site/bond
percolation problem to pure site percolation as follows:

\begin{lemma}
Consider mixed site/bond percolation on $[m]^2$ as defined above
where each node is open with probability $p$ and each link is open
with probability $q$, independently. The size of the largest
bond-connected component is stochastically dominated by
the size of the largest open component in site percolation on $[m]^2$
where each node is open with probability $pq^2$.
\end{lemma}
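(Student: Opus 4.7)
The plan is to reduce the mixed site/bond model to an independent site-percolation model at density $pq^2$ by building an explicit coupling under which open clusters in the site model are always contained in bond-connected components of the mixed model. The claimed stochastic comparison of largest components then follows directly from this almost-sure containment of clusters.

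For the construction, I would view $[m]^2$ as a torus and orient its edges once and for all: for each node $u$, let $e_{u,+x}$ and $e_{u,+y}$ denote the edges from $u$ to $u+e_x$ and $u+e_y$ respectively. Define
\[
\sigma'(u) \;=\; \sigma(u)\cdot\tau(e_{u,+x})\cdot\tau(e_{u,+y}).
\]
Each $\sigma'(u)$ is Bernoulli with parameter $pq^2$. The key combinatorial point is that each edge of the torus enters the definition of exactly one $\sigma'(u)$: a horizontal edge $\{u,u+e_x\}$ is only the $+x$-edge of $u$, and a vertical edge $\{u,u+e_y\}$ is only the $+y$-edge of $u$. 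Together with the independence of $(\sigma(u))_u$, this makes the collection $(\sigma'(u))_u$ a bona-fide independent site-percolation configuration at density $pq^2$.

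The core step is then to check that every $\sigma'$-open path is automatically a bond-open path in $(\sigma,\tau)$. Suppose $u_0,\ldots,u_\ell$ is a nearest-neighbour path with $\sigma'(u_i)=1$ for each $i$. Then every $u_i$ is $\sigma$-open, and for every $i$ both forward edges $e_{u_i,+x}$ and $e_{u_i,+y}$ are $\tau$-open. The displacement from $u_i$ to $u_{i+1}$ lies in one of the four directions $\pm e_x,\pm e_y$: in the two $+$-directions the traversed edge is a forward edge of $u_i$, and in the two $-$-directions it is a forward edge of $u_{i+1}$. In every case that edge is $\tau$-open, so the path is bond-open.

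Consequently, every open site-cluster of $\sigma'$ sits inside a bond-connected component of the mixed configuration, and the claimed comparison of the sizes of the largest components follows. The only technical subtlety is the disjointness property ensuring the independence of the $\sigma'(u)$'s; this is immediate from the consistent choice of orientation, so beyond that the argument reduces to the four-case verification above.
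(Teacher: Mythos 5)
Your proof is correct, and it takes a genuinely different (though closely related) route from the paper's. The paper introduces auxiliary randomization: each link is split into two \emph{half-links}, each independently open with probability $\sqrt{q}$; the event $D(v)$ requires $v$ and all four adjacent half-links to be open, so $\p{D(v)}=p(\sqrt q)^4=pq^2$, and independence of the $D(v)$'s holds because each half-link is adjacent to exactly one node. You instead avoid any extra randomization by orienting each link and assigning it wholly to its tail; $\sigma'(u)$ then requires $u$ and its two forward links to be open, again $pq^2$, with independence because each link is the forward link of exactly one node. The containment argument---a $\sigma'$-open path is a bond-open path---is the same in spirit, but the bookkeeping differs: in the paper two adjacent $D$-events jointly supply the link between them (one half each), whereas in yours exactly one endpoint owns the link outright, hence the four-case orientation check. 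Your version is marginally tidier because it stays within the original probability space and does not manufacture $\sqrt q$-variables; the paper's half-link splitting has the modest advantage of being manifestly symmetric in the two endpoints. One remark: as stated, the lemma has the direction of stochastic domination reversed relative to what both your argument and the paper's establish, and relative to what the downstream application to the Deuschel--Pisztora high-density result requires; the intended conclusion is that the largest bond-connected component stochastically \emph{dominates} the largest site cluster, which is exactly what you prove.
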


\begin{proof}
Split each link in the mixed model into two half-links,
and let each half-link be independently open with probability $\sqrt{q}$.  
We say that a link is open if both half-links are open.

Next, for a node $v$ in the mixed model, we call event $D(v)$
the event that the node and its four adjacent half-links
are open.  This occurs with probability $r:=pq^2$.
Now, consider a coupled site percolation model, also on the $m\times m$ torus,
in which the node $v$ is open if $D(v)$ occurs.  These
are independent events.
So, we have a site percolation model with node probability $r=pq^2$.
It is clear that if a path exists in the site percolation model then
a path exists in the mixed model, so the mixed model
percolates (strictly) better.  
\end{proof}

Now it follows from  \cite{DePi1996a} that in our mixed site/bond
percolation model where nodes and links are open with probability at least
$1-\eta$, the following holds: for every $\epsilon>0$ there
exists $\eta>0$ such that for all $m$ large enough,
\[
  \p{\text{there exists a bond-connected component of size } > (1-\epsilon)m^2} 
  > 1- \epsilon~.
\]
Now, for us the constant $\eta$ is controlled by $k,d, \delta$ and $\gamma$. 
Putting everything together, we have proved the existence of the web:

\begin{proposition}
\label{prop:web}
Let $\epsilon>0$. There exist $k_0,d_0,\delta,\gamma$ such that if $k>k_0$, $d>d_0$,
and $r_n>\gamma \sqrt{\log n/n}$, then for all $n$ large enough,
if all cells are $\delta$-good, then, with probability (conditional on $\bX$)
at least $1-\epsilon$, there exists a connected component of $\Gamma(r_n,\xi)$
such that at least $(1-\epsilon)$-fraction of all boxes contain at least $\EXP[\xi]^{k^2/2}$
vertices of the component.
\end{proposition}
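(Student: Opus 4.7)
The plan is to assemble the machinery already developed: run the sequential exploration of Section \ref{sec:sketch_layer}, use Proposition \ref{pro:vertex_proba} and Lemma \ref{lem:diedge} to bound the conditional probabilities of each node and link event from below, couple the resulting partial configuration $\tilde\sigma$ from below by an independent mixed site/bond percolation $\sigma$ on $[m]^2$ in which each node is open with probability at least $1-\eta$ and each unoriented link with probability at least $(1-\eta/2)^2 \geq 1-\eta$, and then apply the high-density percolation estimate of \citet{DePi1996a} via the mixed-to-pure site reduction established just above to extract a bond-connected component covering a $(1-\epsilon)$-fraction of the nodes of $[m]^2$.

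For the parameter choices, given $\epsilon>0$ first pick $\eta>0$ small enough that the mixed-to-pure reduction followed by \citet{DePi1996a} yields a bond-connected component of size at least $(1-\epsilon)m^2$ with probability at least $1-\epsilon$ in the independent configuration $\sigma$. Next invoke Proposition \ref{pro:vertex_proba} to obtain $k_0$, $d_0$, $\delta$, $\delta_0$ so that for all $k\ge k_0$ and $d\ge d_0$, every node event has conditional probability at least $1-\eta$ whenever the starting cell is $\delta$-good and $\sup_{S\in \cS(Q)} |F\cap S|<\delta_0 n r_n^2$. Because we operate in the regime $\E{\xi}>1$, a successful node event leaves $|R|\ge \E{\xi}^{k^2/2}$ points in each seed box, so Lemma \ref{lem:diedge} then makes each link event succeed with conditional probability at least $1-\exp(-\E{\xi}^{k^2/2}/(10\beta d^2)^{kd})$, which exceeds $1-\eta/2$ once $k$ is taken large enough. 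Finally pick $\gamma$ large enough that Lemma \ref{lem:strong_uniform} guarantees every cell is $\delta$-good with high probability, matching the proposition's hypothesis.

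The remaining verifications concern the propagation of the forbidden-set hypothesis through the exploration and the validity of the stochastic domination. By Lemma \ref{lem:forbidden-points}, at every step $|F_i \cap Q|\le \kappa^{2k^2}$, a constant independent of $n$, while $nr_n^2\ge \gamma^2\log n\to\infty$; hence $\sup_{S\in \cS(Q)} |F_i \cap S| <\delta_0 nr_n^2$ for all $n$ large enough, so the estimates of Proposition \ref{pro:vertex_proba} and Lemma \ref{lem:diedge} apply uniformly along the exploration. A standard one-step-at-a-time coupling (process the nodes in the lexicographic order of Section \ref{sec:sketch_layer}, condition on $\bX$ and the entire history $F_i$ at step $i$, and use the uniform conditional lower bounds) then produces independent Bernoulli variables $(\sigma(u))_{u\in [m]^2}$ and $(\sigma(uv))_{uv\in E_m^+}$ dominated by the corresponding entries of $\tilde\sigma$. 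Applying \citet{DePi1996a} in $\sigma$ yields a bond-connected component of at least $(1-\epsilon)m^2$ nodes; each such open node $u$ carries a bush in $Q_u$ with at least $\E{\xi}^{k^2/2}$ vertices in every one of its $k^2d^2$ boxes, and the open links glue the bushes together into a single connected component of $\Gamma_n(r_n,\xi)$. The fraction of boxes containing at least $\E{\xi}^{k^2/2}$ vertices of this component is therefore at least the fraction of cells covered, namely $1-\epsilon$. The main subtlety throughout is preserving the uniform conditional lower bound $\p{\tilde\sigma(u_i)=1 \mid \bX, F_i}\ge 1-\eta$ against the dependence introduced by the growing forbidden set, which is exactly what Lemma \ref{lem:forbidden-points} is designed for.
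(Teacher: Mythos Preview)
Your proposal is correct and follows essentially the same route as the paper: the sequential exploration of Section~\ref{sec:sketch_layer}, uniform conditional lower bounds from Proposition~\ref{pro:vertex_proba} and Lemma~\ref{lem:diedge} maintained via Lemma~\ref{lem:forbidden-points}, stochastic domination of $\tilde\sigma$ by an independent mixed site/bond configuration $\sigma$, the reduction to pure site percolation, and finally the high-density result of \citet{DePi1996a}. Your accounting of the parameter choices (first $\eta$ from the percolation side, then $k,d,\delta,\delta_0$ from the node-event side, then $k$ large enough to make the link-event bound beat $1-\eta/2$ since $\E{\xi}^{k^2/2}$ eventually dominates $(10\beta d^2)^{kd}$) and your observation that a $(1-\epsilon)$-fraction of covered cells translates directly into a $(1-\epsilon)$-fraction of covered boxes are exactly what the paper does implicitly.
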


\subsection{Finale: gathering most remaining points}
\label{sec:pull_phase}

In the previous sections we saw that after exploring only a constant 
number of points per cell (at most $m^2\kappa^{2 k^2}\le 2\kappa^{2 k^2}/(k^2r_n^2)$ 
in total by Lemma \ref{lem:forbidden-points})
with high probability, we can construct a connected component ---the so-called web--- 
of the graph $\Gamma(r_n,\xi)$ that contains at least $\EXP[\xi]^{k^2/2}$ points
in a vast majority of boxes. Recall that each box is a square of side length
$r_n'/(2d)$ where $d$ is a fixed but large odd integer.

It remains to prove that most other vertices belong to the same component
as the web, with high probability. To this end, first we show that 
any not yet explored vertex is contained in the same component as the web,
with high probability. As before, we fix a sufficiently small $\delta>0$ and fix a point 
set $\bX$ such that every cell is $\delta$-good.
Suppose that the exploration process of the previous sections has been 
carried out, revealing the edge choices of at most $\kappa^{2 k^2}$
points per cell (and thus also at most this many per box). If $n$ is so large that 
$\delta^2 \gamma \log n/(4d^2)> \kappa^{2 k^2}$, then even after removing
all vertices whose choices have already been exposed, every cell remains $2\delta$-good.
Let $x_i\in \bX$ be one of the still unseen vertices. We shift the coordinate system
so that the box containing $x_i$ becomes the central box of the first cell. 
Since the boxes in the new coordinate system were also boxes in the original 
coordinates, every cell is still $2\delta$-good in the new system.

Now we build a second web, with the aim that it contains $x_i$ with probability 
close to one.
We start the same exploration process from the vertex 
$x_i$ as in the construction of the web but now we place all vertices of the first 
web in the set of forbidden points. If $\delta$ is sufficiently small, then
Proposition \ref{prop:web} applies and, with probability at least
$1-\epsilon$, we obtain another web that has at least $\EXP[\xi]^{k^2/2}$
vertices in at least $(1-\epsilon)$-fraction of the boxes. The newly built web
may not contain the vertex $x_i$. However, by the homogeneity of the
mixed site/bond percolation process, each cell is equally likely to
be contained in the newly built web and therefore, with probability at least
$(1-\epsilon)^2$ vertex $x_i$ is contained in a component 
that has at least $\EXP[\xi]^{k^2/2}$
vertices in at least a $(1-\epsilon)$-fraction of the boxes. 
It is clear from the proof of Proposition~\ref{pro:vertex_proba} that,
in fact, each of these boxes contains at least $\EXP[\xi]^{k^2/2}$ points
whose edge choices have not been revealed in the process of building
the second web.
Thus, with probability at least $(1-\epsilon)^3$,
at least $(1-2\epsilon)$-fraction of the boxes contain at least $\EXP[\xi]^{k^2/2}$
points of the first web and at least $\EXP[\xi]^{k^2/2}$ points of the second web
that contains the vertex $x_i$. Now we may reveal the edge choices
of the vertices of the second web that have not been explored. 
Since the diameter of a box is less than $r_n$, the probability that
the two webs do not connect ---if they have not been connected already---
is at most
\[
  \left(1-\frac{\EXP[\xi]^{k^2/2}}{(1-2\delta)nr_n^2/(4d^2)}\right)^{m^2(1-2\epsilon)}
   = o(1)
\]
whenever $r_n=o(n^{-1/4})$.

Thus, conditionally on the fact that all cells are $\delta$-good, 
the expected number of vertices that do not connect to the web is $o(n)$.
Finally, Markov's inequality, and Lemma \ref{lem:strong_uniform}
complete the proof of Theorem~\ref{thm:main}.

\section{Getting out of the central box: Proof of Proposition~\ref{pro:vertex_proba}}
\label{sec:vertex}

\subsection{Constructing a branching random walk}

Most of the work consists in estimating the probability of the local events, 
while ensuring independence. 
In this section, we consider a single cell $Q$ of side length $kr_n'/2$. 
As we have already explained, the local bushes are constructed by a process that 
resembles a \emph{branching random walk} in the underlying geometric graph. 
The main differences with an actual branching random walk are that 
\begin{itemize}
\item the potential individuals are the elements of $\bX$, and so they are 
fixed conditionnaly on $\bX$), 
\item an element of $\bX$ only gets to choose its neighbors once; 
in particular, if a vertex $X_i$ is chosen that has already 
used up its $\xi_i$ choices, the corresponding branch of the exploration must stop 
(if we were to continue the exploration, it would trace steps that have already 
been discovered). 
\end{itemize}

The entire argument in this section is conditional on the location of the
points, assuming the regularity property that the cell $Q$ is $\delta$-good.
Recall that a cell $Q$ of side-length $kr_n'/2$ is called 
$\delta$-good if the numbers of points within every box $S$ it contains lies 
within a multiplicative $[1-\delta, 1+\delta]$ range of its expected value $\EXP |\bX\cap S|$. 
By Lemma~\ref{lem:strong_uniform}, for any $\epsilon>0$, the probability that
every cell is $\delta$-good is at least $1-\epsilon$ for all $n$ large enough
(provided the constant $\gamma$ in $r_n=\gamma \sqrt{\log n/ n}$ is large enough).

Fix a cell $Q$, in which we want to analyze the node event. Then, for every $i\in [n]$ 
such that $X_i\not \in Q$, we work with an independent copy of $\xi_i$. Since such
points $X_i$ are not considered when analyzing the node event on $\bX$, this has 
no effect on the event $N_x(Q)$, for $x\in \bX \cap C(Q)$. However, this makes the 
proofs a little smoother since we can look at all $k^2$ neighborhoods without 
worrying (at least in a first stage) whether the points are in $Q$ or not. 
Note the important fact that this is only used for the \emph{analysis}, and that the 
actual exploration is not carried out when the points leave the cell $Q$. In 
particular, this thought experiment does not affect the number of forbidden 
vertices.


\medskip
\noi\textsc{discretizing the steps.}\
We define a \emph{skimmed} version of the neighborhood exploration in which we drop some 
of the points in order to guarantee simplified dynamics. The simplification uses the 
refined discretization of the space into boxes.
Let $\cS$ denote the collection of all boxes (open squares). 
For a point $x\in [0,1]^2$, we let $S(x)\subset Q$ denote the box containing the point $x$ 
(this is well-defined for every point of $Q$ with probability one).


For a given point $x\in Q\subseteq [0,1]^2$, let $A^\circ_x\subseteq Q$ denote the union 
of the boxes that are fully contained in the ball of radius $r_n$ and centered at $x$. 
Then, for any box $S\in \cS$ and $x\in S$ define 
$$A_x=\bigcap_{y\in S} A^\circ_y$$
(Figure~\ref{fig:discrete-step}).
Write $\cA_x$ for the collection of boxes whose union is $A_x$, and 
let $a$ denote the number of boxes that compose $\cA_x$, for $x\in [0,1]^2$. 
\nomenclature[A]{$\cA_x$}{The collection of boxes arranged in a ``discrete disk'' around 
the point $x$}
\nomenclature[a]{$a$}{The common cardinality of $\cA_x$, for $x\in [0,1]^2$}

\begin{figure}[t]
    \centering
    \includegraphics[scale=.9]{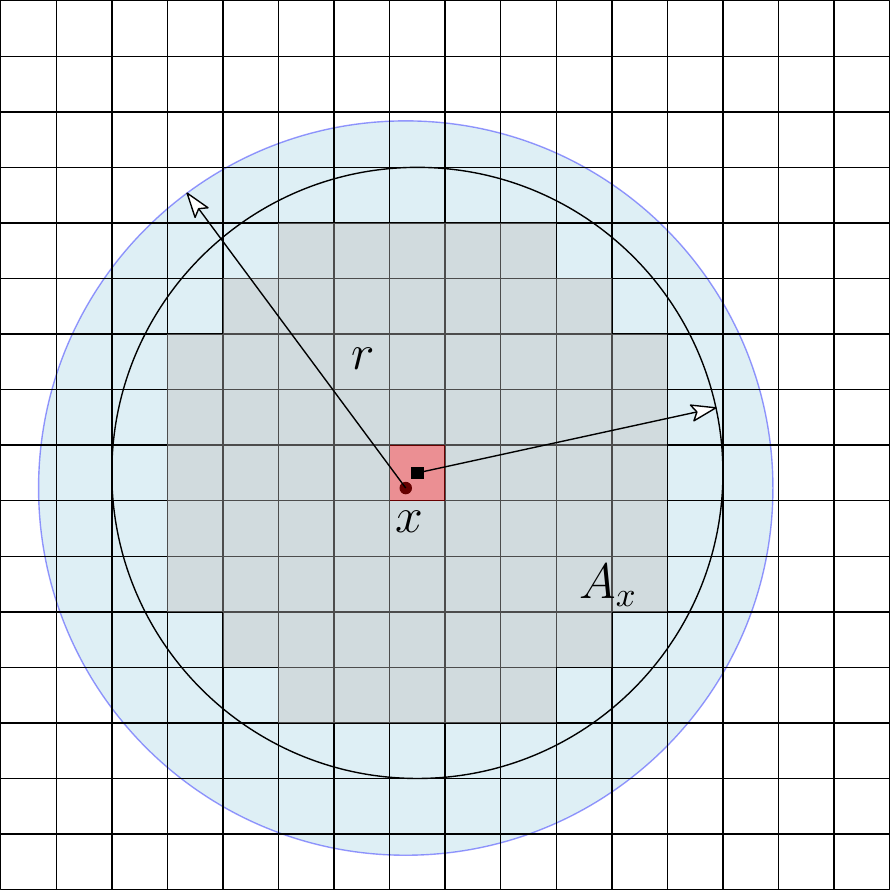}
    \caption{\label{fig:discrete-step}
    Any circle of radius $r_n$ centered at $x$ fully contains a $A_x$ which
    is a copy of a \emph{fixed} collection of boxes.}
\end{figure}

\begin{lemma}\label{lem:number-points}
For any $\delta\in (0,1/4)$ there exist constants $k_0$, $d_0$ and $n_0$ 
such that for all $k\ge k_0$, $d\ge d_0$ and $n\ge n_0$, we have
$$|a-\pi d^2|\le \delta d^2.$$
\end{lemma}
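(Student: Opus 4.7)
The plan is a standard sandwich argument: I bound $a = |\cA_x|$ above and below by box-counts in two disks whose radii agree up to a small correction, then use the large-$d$ and small-$kr_n$ regime to squeeze out the error.

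First I would fix the shape. By the very definition $A_x = \bigcap_{y \in S(x)} A^\circ_y$, the set $\cA_x$ depends on $x$ only through the box $S := S(x)$ containing it, and by translation-invariance of the box grid $a$ is a single integer depending on $k, d, r_n, r_n'$. A box $T$ lies in $\cA_x$ iff the closure of $T$ is contained in $B(y, r_n)$ for every $y \in S$, and by convexity this is equivalent to every corner of $T$ being within distance $r_n$ of every corner of $S$. The common intersection $\bigcap_{y \in S} \overline{B(y, r_n)}$ is a convex region centered at $x$ that is sandwiched between two disks about $x$: it is contained in $B(x, r_n)$, and it contains $B(x, r_n - \mathrm{diam}(S))$. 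Hence $\cA_x$ sits between the collection of boxes whose closures lie in $B(x, r_n)$ and the collection of boxes whose closures lie in $B(x, r_n - \mathrm{diam}(S) - \mathrm{diam}(T))$.

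Second, I would count by area. Each box has side $\ell = r_n'/(2d)$ and diagonal $\ell\sqrt{2}$. A standard area-vs-perimeter estimate says that the number of axis-aligned boxes of side $\ell$ whose closures fit inside a disk of radius $\rho$ is $\pi\rho^2/\ell^2$ up to a boundary correction of order $\rho/\ell$. Applying this at the outer radius $\rho = r_n$ and at the inner radius $\rho = r_n - O(r_n'/d)$, together with the relation $r_n' = r_n(1 + O(kr_n))$ that comes from the definition of $r_n'$, both bounds equal the claimed leading constant times $d^2$ up to additive corrections of size $O(d) + O(d^2 \cdot kr_n)$.

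The main obstacle is the careful bookkeeping of the two error sources. The annulus between the inner and outer disks has width $O(r_n'/d)$ and perimeter $O(r_n)$, so its area is $O(r_n^2/d)$, which, divided by the box area $\ell^2 \asymp r_n^2/d^2$, accounts for $O(d)$ boxes and hence a relative error of order $1/d$ in $a/d^2$. The disparity between $r_n$ and $r_n'$ contributes a second relative error of order $kr_n$, which vanishes as $n \to \infty$ since $kr_n \to 0$. Given $\delta \in (0,1)$, one therefore picks $d_0$ large enough that the first error is at most $\delta/3$, fixes any $k \ge k_0$, and then chooses $n_0$ large enough that $kr_n \le \delta/(3C)$ for a suitable $C$ and all $n \ge n_0$. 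This yields $|a - \pi d^2| \le \delta d^2$ for all $k \ge k_0$, $d \ge d_0$, $n \ge n_0$, as stated.
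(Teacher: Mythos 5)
Your approach matches the paper's: a sandwich argument bounding $a$ by box counts in two concentric disks, estimated by area $\pi\rho^2$ up to a perimeter-order boundary correction, with the discrepancy between $r_n$ and $r_n'$ contributing a further $O(kr_n)$ relative error. You treat the intersection over $y\in S$ somewhat more carefully than the paper does (the paper's lower bound is phrased only in terms of boxes not fully contained in $B(x,r_n)$), but this is a cosmetic refinement, not a different proof. One point you should settle rather than defer to: with the box side $\ell = r_n'/(2d)$ that you correctly use (it matches the paper's definition $1/(mkd) = r_n'/(2d)$), your own area estimate gives a leading term $\pi r_n^2/\ell^2 = 4\pi d^2 (r_n/r_n')^2 \approx 4\pi d^2$, not $\pi d^2$, so ``the claimed leading constant'' is off by a factor of four from what your computation actually produces. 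The paper's proof has the same inconsistency---it asserts a box area of $(r_n'/d)^2$ despite having defined the side length as $r_n'/(2d)$---so you have inherited an infelicity rather than introduced one, and it does not affect anything qualitatively; but in a careful write-up you should derive the constant explicitly rather than take it on faith.
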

\begin{proof}
First, every box that is fully contained in $B(x,r_n)$ accounts for an area of 
$(r_n'/d)^2$, so we must have $a(r_n'/d)^2\le \pi r_n^2$. 
Now, since $(1-kr_n)r_n\le r_n'\le r_n$ (see just below the definition in \eqref{eq:def_m-rp}), 
it follows that
\begin{align*}
a 
\le \pi d^2 \pran{\frac{r_n}{r_n'}}^2
\le \pi d^2 \cdot \frac 1 {1-k r_n}
\le \pi d^2 (1 + \delta),
\end{align*}
provided that $kr_n\le \delta/2<1/4$.

On the other hand, the boxes that intersect the ball $B(x,r_n)$ but are not fully 
contained in it must touch the boundary of $B(x,r_n)$. So any such box must
lie entirely inside the annulus $B(x,r_n+r_n'\sqrt 2/d) \setminus B(x, r_n-r_n'\sqrt 2/d)$. 
In particular, the number of these boxes is at most
\begin{align*}
\frac{\pi}{(r_n'/d)^2} 
\Bigg\{ \pran{r_n+\frac{r_n'\sqrt 2}{d}}^2 - \pran{r_n-\frac{r_n'\sqrt 2}{d}}^2 \Bigg\} 
\le 4\pi \sqrt 2 d \frac {r_n} {r_n'}.
\end{align*}
Since $(1-kr_n)r_n\le r_n'\le r_n$, it follows that 
\begin{align*}
a
&\ge \pi d^2 \pran{\frac {r_n} {r_n'}}^2 - 4\pi d \sqrt 2 \frac {r_n} {r_n'}\\
&\ge \pi d^2 - 8 \pi d \sqrt 2,
\end{align*}
for all $k$ and $r_n$ such that $2kr_n\le 1$. The result follows readily. 
\end{proof}

\medskip
\noi\textsc{extracting a discrete branching random walk.}\ 
Next we obtain lower bounds for the sizes of neighbourhoods of points $x$ 
in the irrigation graph. 
It is here that the discretization in boxes is important since it 
ensures that the process $S(y)$, $y\in \Delta_x$, of boxes containing the 
points which may be reached from $x$ using directed edges \emph{dominates} 
a branching random walk. In order to properly extract this branching random walk 
on the set of boxes 
\begin{itemize}
    \item we artificially reduce the number of points in each box 
so that the distributions of the number of offspring of the spatial increments are 
\emph{homogeneous}, which fixes the spatial component, and 
    \item we then ensure that the offspring of every individual has the same distribution,
    so that the underlying genealogy is a Galton--Watson tree.
\end{itemize}

We now define the discrete branching random walk as a process indexed by the 
infinite plane tree $$\cU=\bigcup_{n\ge 0} \{1, 2, 3, \dots \}^n,$$ where the 
individuals in the $n$-th generation are represented by a word of length $n$ on the 
alphabet $\{1,2,\dots\}$. The tree $\cU$ is seen as rooted at the empty word $\varnothing$. 
The descendants of an individual $u$ are represented by the words have $u$ as a prefix. 
The children of $u\in \cU$ are $ui$, $i\ge 1$. If $v=ui$ for some $u\in \cU$ and $i\ge 1$,
$u$ is the parent of $v$ and is denoted by $p(v)=u$. 
\nomenclature[p]{$p(u)$}{The parent of $u\in \cU$, $u\ne \varnothing$}
For $u,v\in \cU$ we write $u\preceq v$ if $u$ is an ancestor of $v$, potentially $u=v$.

\medskip
\noi\textsc{exploring the neighborhoods in $\Gamma_n^+$ and the spatial component.}\ 
Consider any $\delta$-good cell $Q$, and a point $x\in Q$. 
We define the tree-indexed process $(Z^\circ_u, u\in \cU)$ corresponding to 
\nomenclature[Zo]{$Z^\circ$}{An auxiliary process used to define the branching random
walk $Z$}
the exploration of the neighborhoods of $x$ in the directed irrigation graphs $\Gamma_n^+$. 
To properly prune the tree we introduce a cemetery state $\partial$, that is assigned 
to the words of $\cU$ that do not correspond to a vertex of $\bX$. 
More precisely, we first define an auxiliary process $(Z^\circ_u, u\in \cU)$. 
We set $Z^\circ_\varnothing = x$; 
then if $Z^\circ_u=z\in \bX$ for some $u\in \cU$, 
we define $Z^\circ_{ui}=Y_i(z)$, for $1\le i\le \xi_z$, 
and $Z^\circ_{ui}=\partial$ for $i>\xi_z$ and similarly for any word $w$ with $ui$ as a prefix. 
Then, for $m\ge 0$ integer, $\{Z^\circ_u: |u|=m, Z^\circ_u\ne \partial\}$ 
is precisely the set of points of $\bX$ that can be reached 
from $x$ using a path of length exactly $m$. However, some points may appear more than
once in the process. 

We now \emph{skim} the process in order to extract a subprocess for which all the points 
are distinct, and for which we can guarantee that the process induced on the set of boxes
visited by the points is a branching random walk. Although it is not crucial, 
it is natural to skim the tree in the breadth-first order, where for $u,v\in \cU$ we write
$u\preceq_b v$ if $|u|<|v|$ or if there exists $w\in \cU$ and $u=wi$, $v=wj$ for $i<j$. 
We let $T_x(u)$ be the event that $Z^\circ_u\neq Z^\circ_v$ for every $v\preceq_b u$. 

The skimming is done by maintaining a set of \emph{valid} points, which 
ensures that a point choosen at random among the valid points in a 
certain subset of the boxes is contained in a uniformly random box in that subset.
Initially, we have a set of \emph{valid} points $V\subseteq \bX \setminus F$, that are points 
whose choices have not yet been exposed, but maybe not all such points. 
We choose $V$ in such a way that for every box $S\in \cS$, the number of 
elements of $V\cap S$ is the same, and we denote by $c$ this the common cardinality. 
We do this in such a way that  the set $V$ has maximal cardinality. 
Let $(w_i, i\ge 0)$ be the breadth-first 
ordering of the elements of the set $\{u:Z^\circ_{u}\neq \partial, |u|\le k^2\}$. 
Set $V_\varnothing=V$. 
If $x\not\in V_\varnothing$, set $Z^\di_\varnothing=\partial$ as well as for all the 
words $u\in \cU$; set $V_{w_1}=V_\varnothing$. 
\nomenclature[Zs]{$Z^\di$}{A auxiliary process used to define the branching random walk $Z$}
Otherwise $x\in V_\varnothing$, and we set $Z^\diamond_\varnothing = x$. 
Then, we update the set of valid points. 
For each box $S\in \cS\setminus \{S(x)\}$, let $X_{(\varnothing)}(S)$ be the 
point of $V_\varnothing \cap S$ which has minimum index in $\bX$, if such a point exists, 
or $X_{(\varnothing)}(S)=\partial$ otherwise. Then we set
$$V_{w_1}:=V_\varnothing \setminus \big( \{X_{(\varnothing)}(S): S\in \cS\} \cup \{x\}\big),$$
which ensures that the number of points in $V_{u_1}\cap S$ is the same and equal to $c-1$ 
for every box $S\in \cS$, since precisely one point has been removed from every box.

Suppose now that we have 
defined $Z^\diamond_{w_j}$ for all $j<i$, and $V_{w_j}$ for $j\le i$. 
Then, if $Z^\circ_{w_i}\in V_{w_i}\cap A_{Z_{p(w_i)}}$, we set
$Z^\diamond_{w_i}=Z^\circ_{w_i}$. Otherwise define $Z^\diamond_{w_i}=\partial$. Then, 
for every box $S\in \cS$, let $X_{(i)}(S)$ be the point in $V_{w_i}\cap S$ which 
has minimum index in $\bX$, and define
$$
V_{w_{i+1}}=
\left\{
\begin{array}{l l}
V_{w_i}\setminus \big(\{X_{(i)}(S): S\in \cS\setminus \{S(Z^\diamond_{w_i})\}\}
\cup \{Z^\diamond_{w_i}\} \big) & \text{if~}Z^\diamond_{w_i} \ne \partial\\
V_{w_i}\setminus \{X_{(i)}(S): S\in \cS\} & \text{if~}Z^\diamond_{w_i}=\partial.
\end{array}
\right.
$$

\medskip
\noi\textsc{skimming the underlying genealogy.}\ 
The process of interest is $(S(Z^\diamond_u), u\in \cU)$. Note that for $u,v\in \cU$, 
with $u$ the parent of $v$ in $\cU$, $u=p(v)$, conditional on 
$Z^\diamond_u,Z^\diamond_v\neq \partial$, 
and say $S(Z^\diamond_u)=s$, the box $S(Z^\diamond_v)$ which contains $Z_v$ is uniformly 
random in $\cA_s$. 
So for every sequence of words $(v_i, i\ge 0)$ in $\cU$ with $|v_i|=i$, 
conditional on $Z_{v_\ell}\ne \partial$, the process $(S(Z^\diamond_{v_i}))_{0\le i\le \ell}$ 
is a random walk with i.i.d.\ increments.
The only reason why the entire process $(S(Z^\diamond_u), u\in \cU)$
is not a branching random walk is that the individuals do not all jump to $\partial$
with the same probability (in other words the individuals 
do not all have the same offspring distribution) either because of the 
inhomogeneity of the point set $\bX$, or because of the changing number of valid points. 
We now construct the (truncated) branching random walk $(Z_u, |u|\le k^2)$ by 
homogenizing the offspring distribution. 
For $u\in \cU$, and $s\ge 0$, define 
\begin{equation}\label{eq:keep_child}
\alpha_i:=\frac{a(c-i)}{\rho(Z^\di_{p(w_i)})},
\end{equation}
that is the probability that the node $w_i$, which is a child of $p(w_i)$, is 
such that $Z^\di_{w_i}\ne \partial$. Let also $\alpha:=\inf\{\alpha_i: 1\le i\le i_m\}$,
where $i_m:=\#\{u:Z^\circ_{u}\neq \partial, |u|\le k^2\}$. 
Let $U_i$, $i\ge 1$, be a collection of i.i.d.\ random variables uniformly distributed on $[0,1]$, 
and finally define 
\[
Z_{w_i} = 
\left\{
\begin{array}{ll} 
Z^\diamond_{w_i} & \text{if~} U_i \le (1-\alpha)/(1-\alpha_i)\\
\partial & \text{otherwise}. 
\end{array} 
\right.
\]
\nomenclature[Z]{$Z$}{The branching random walk on $[m]^2$}
Then, for every $u$, $|u|<k$, writing $\zeta_u:=\#\{ui: Z_{ui}\ne \partial\}$ for the 
offspring of $u$, $(\zeta_u: |u|<k^2)$ is a collection of i.i.d.\ random variables; 
write $\zeta$ for the typical copy of this random variable.
\nomenclature[z]{$\zeta$}{The offspring distribution of the branching random walk $Z$}
More precisely, $\zeta_u$ is distributed like a binomial random variable with parameters 
$\xi_u$ and $\alpha$. In particular, 
\begin{equation}\label{eq:bound_alpha}
\alpha \ge \frac{(\pi-\delta)d^2 (\eta-\delta) \log n - \kappa^{2k^2}}
{(\pi+\delta) d^2 (\eta+\delta) \log n},
\end{equation}
and if we write $\Ec{\xi}=1+\epsilon$ for $\epsilon>0$, it is possible to choose $\delta$, 
$d_1$, $n_1$ large enough such that for $d\ge d_1$ and $n\ge n_1$, we have 
\begin{equation*}
\Ec{\zeta} \ge 1+\epsilon/2.
\end{equation*}

\subsection{Analyzing the discrete branching random walk.}

In this section, we slightly abuse notation and identify the set of boxes and their 
representation as the discrete torus. Furthermore, since for $n$ large enough, the 
difference between the torus and $Z^2$ cannot be felt by a walk of $k^2$ steps, we 
talk about $\Z^2$. In particular, we let $\cA$ denote the subset of $\Z^2$ corresponding 
to the boxes in $\cA_0$, which is the set of potential spatial increments of our walks.

We now consider the (truncated) \emph{branching random walk} $(Z_u,|u|\le k^2)$ taking values 
in $\Z^2$, that we complete into a branching random walk by generating the missing 
individuals using an independent family of random variables for the offspring and the 
spatial displacements. 
By definition, an individual $u$ located at $Z_u$ gives birth to $\zeta_u$
individuals, such that the displacements are i.i.d.\ uniform in $\cA$. 
Furthermore, every individual behaves in the same way and independently of the others. 
For $S\in \cS$ and $i\ge 0$, define 
$$M_i(S):=\#\{u\in \cU: |u|=i, Z_u\in S\},$$
the number of individuals $u\in \cU$ in generation $i$ such that $Z_u\in S$. 


\begin{lemma}\label{lem:fill-up}
Let $q>0$ be the extinction probability of the Galton--Watson process 
underlying the branching random walk $(Z_u)_{u\in \cU}$. Then, 
for all $k$ large enough, we have
$$\p{\#\{v\in \cU: |v|=k^2, Z_v\in S\} \le \Ec{\zeta}^{2k^2/3} } \le 2q.$$
\end{lemma}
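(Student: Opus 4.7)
The plan is to combine a Kesten--Stigum-type lower bound on the total population at an intermediate generation with a second-moment concentration argument using the local central limit theorem for the random walk with step distribution uniform on $\cA$. I would split the tree at the intermediate generation $n_0 := \flr{k^2/2}$ and view the $M_{n_0}$ particles alive at time $n_0$ as independent roots of $M_{n_0}$ i.i.d.\ branching random walks of remaining height $k^2 - n_0$.

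Since $\zeta \le \kappa$ is bounded, Kesten--Stigum applies: the martingale $M_n/\Ec{\zeta}^n$ converges a.s.\ to a limit $W$ with $\{W > 0\}$ equal to the survival event (of probability $1-q$) up to a null set. Hence, for any $\eta > 0$ there exist $c_0 > 0$ and $k_1$ such that for all $k \ge k_1$,
\[
\p{M_{n_0} \ge c_0 \Ec{\zeta}^{n_0}} \ge 1 - q - \eta.
\]
Conditionally on the positions $x_1,\ldots,x_{M_{n_0}}$ at generation $n_0$, let $N_j$ be the number of descendants of particle $j$ located in $S$ at generation $k^2$; the $N_j$ are conditionally independent. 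By the many-to-one formula, $\Ec{N_j \mid x_j} = \Ec{\zeta}^{k^2 - n_0}\, p_{k^2-n_0}(x_j, S)$, where $p_t$ is the $t$-step transition kernel of the random walk with step uniform on $\cA$. By Lemma~\ref{lem:number-points}, $\cA$ is a nearly-round discrete disk of radius $\sim d$, so a standard local CLT yields $p_{k^2-n_0}(x, S) \ge c_1/(d^2 k^2)$ uniformly for $x$ and $S$ within the cell. Thus $\Ec{N_j \mid x_j} \ge c_2 \Ec{\zeta}^{n_0}/(d^2 k^2)$. Using $N_j \le M^{(j)}_{k^2-n_0}$ and the supercritical second-moment bound $\Ec{(M^{(j)}_{k^2-n_0})^2} \le C \Ec{\zeta}^{2(k^2-n_0)}$ for bounded offspring, Chebyshev's inequality yields
\[
\p{\textstyle\sum_j N_j \le \tfrac12 \Ec{\sum_j N_j \mid \{x_j\}} \,\Big|\, \{x_j\}} \le \frac{4 C (d^2 k^2)^2}{c_2^2\, M_{n_0}},
\]
which is $o(1)$ as $k \to \infty$ on the event $\{M_{n_0} \ge c_0 \Ec{\zeta}^{n_0}\}$, since $M_{n_0}$ grows exponentially in $k^2$ while the numerator is polynomial in $k$. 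On that intersection, $\sum_j N_j \ge c_0 c_2\, \Ec{\zeta}^{k^2}/(2 d^2 k^2) \ge \Ec{\zeta}^{2k^2/3}$ for $k$ large enough because $\Ec{\zeta} > 1$. The overall failure probability is then at most $q + \eta + o(1)$, which is at most $2q$ for $\eta$ small and $k$ sufficiently large (using $q > 0$).

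The main obstacle I anticipate is establishing the uniform local CLT lower bound $p_t(x, S) \ge c/(d^2 t)$ at time $t \asymp k^2$ and displacement $|x-S| = O(dk)$: although the step distribution uniform on $\cA$ has zero mean (by symmetry) and covariance of order $d^2$, one must verify aperiodicity and irreducibility of the walk on $\Z^2$ (both straightforward from the approximate disk structure described in Lemma~\ref{lem:number-points}) and check that the approximation of $\cA$ by a perfect disk does not degrade the estimate in the diffusive regime. Once this local CLT bound is in hand, the Kesten--Stigum step and the Chebyshev calculation are standard given the boundedness of $\zeta$.
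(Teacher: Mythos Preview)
Your overall strategy (Kesten--Stigum at an intermediate generation, then a second-moment argument via the local CLT) is a legitimate alternative to the paper's proof, but there is a genuine gap, and in fixing it you are pushed back toward the paper's choice of intermediate level.

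The gap is in the sentence ``a standard local CLT yields $p_{k^2-n_0}(x,S)\ge c_1/(d^2k^2)$ uniformly for $x$ and $S$ within the cell,'' followed by the use of this bound for \emph{every} particle at generation $n_0=\lfloor k^2/2\rfloor$. After $k^2/2$ steps of a walk with step variance of order $d^2$, the positions $x_j$ have standard deviation of order $dk$, comparable to the cell diameter; you have no control ensuring the $x_j$ lie in the cell, and for particles with $\|x_j\|\gg dk$ the Gaussian factor in the local CLT kills the lower bound. Kesten--Stigum controls only the \emph{count} $M_{n_0}$, not the spatial distribution of the particles, and the $x_j$ are not independent (they share ancestry), so bounding the fraction of particles within $O(dk)$ of the origin requires a separate argument (e.g.\ a many-to-two / second-moment estimate on the spatially restricted count). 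Without it, your lower bound on $\Ec{\sum_j N_j\mid\{x_j\}}$ and hence the Chebyshev step are unjustified.

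The paper sidesteps this entirely by splitting at generation $\lfloor\mu k\rfloor$ with $\mu<1/(8d)$: since each step has norm at most $2d$, every particle at that level lies within $kd/4$ of the origin \emph{deterministically}, so uniform spatial control is free. From there the paper does not run a Chebyshev argument but instead uses Lemma~\ref{lem:fill-up-one} (a KMT strong-approximation estimate) to show that, for each of the exponentially many particles at level $\lfloor\mu k\rfloor$, the subtree produces the required population in $S$ with probability at least $k^{-c}$, and then takes a union/independence bound over subtrees. Your second-moment route would also work once the split is moved to level $\lfloor\mu k\rfloor$: with all $x_j$ in the inner quarter of the cell, the uniform LCLT bound is valid and your Chebyshev ratio becomes $O((d^2k^2)^2/\Ec{\zeta}^{\mu k})=o(1)$.

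One further point: the paper's proof, via Lemma~\ref{lem:fill-up-one}, actually establishes the stronger statement that the relevant lineages \emph{stay inside} $\cS$ for all $k^2$ steps (the random-walk estimate there is for $\{R_{k^2-\mu k}=y,\ R_i\in\cS\ \forall i\}$). This confined version is what is needed to transfer the conclusion back to the exploration $\Delta_x(k^2)$ in Proposition~\ref{pro:vertex_proba}, since $\Delta_x$ is restricted to the cell $Q$. Your local CLT bound on the \emph{unrestricted} kernel $p_t(x,S)$ proves the lemma as literally stated but not this confined version; to recover it you would need a lower bound on the killed transition probability (e.g.\ via the same KMT coupling or a reflection argument), which is precisely the content of Lemma~\ref{lem:fill-up-one}.
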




Before proving Lemma~\ref{lem:fill-up}, we show that the extinction probability 
$q$ in the bound may be made as small as we want by choice of the constants. 
By the bound in \eqref{eq:bound_alpha}, this reduces to showing that the 
extinction probablity goes to zero as $\alpha\to 1$. 

\begin{lemma}\label{lem:bound_extinct}
Let $q$ be the extinction probability of a Galton--Watson process with 
offspring distribution $\zeta=\bin(\xi,\alpha)$ such that $\Ec{\xi}\alpha>1$
and $\xi\ge 1$. Then,
$$q\le \frac{1-\alpha}{1-\Ec{(1-\alpha)^\xi} - \Ec{\xi \alpha (1-\alpha)^{\xi-1}}}.$$
\end{lemma}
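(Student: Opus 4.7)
The plan is to identify $q$ as the smallest non-negative fixed point of the probability generating function
$$f(s)=\Ec{s^\zeta}=\Ec{(1-\alpha+\alpha s)^\xi}=\sum_{k\ge 0}p_k s^k, \qquad p_k=\Pr(\zeta=k),$$
and to exhibit $s^* = (1-\alpha)/(1-A-B)$ as an explicit super-fixed-point. Writing $A := f(0)=\Ec{(1-\alpha)^\xi}$, $B := f'(0)=\Ec{\xi\alpha(1-\alpha)^{\xi-1}}$ and $C := 1-A-B = \sum_{k\ge 2}p_k$, the convexity of $f$ on $[0,1]$ together with $f(1)=1$ and $f'(1)=\Ec{\xi}\alpha>1$ implies that any $s^*\in[0,1]$ with $f(s^*)\le s^*$ must satisfy $s^*\ge q$. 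Hence it is enough to verify the inequality $f(s^*)\le s^*$ for $s^* = (1-\alpha)/C$.

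The key input is a quadratic upper bound on $f$: since $s^k\le s^2$ for $s\in[0,1]$ and $k\ge 2$, one has
$$f(s) \le A + Bs + Cs^2, \qquad s\in[0,1].$$
We may restrict attention to the case $s^*\le 1$, that is $C\ge 1-\alpha$, for otherwise the claimed bound on $q$ is vacuous. Substituting $s^* = (1-\alpha)/C$ into the quadratic bound and using $A+B+C=1$, a short computation yields
$$C\bigl(f(s^*)-s^*\bigr) \le AC - (1-\alpha)(\alpha - B) = -\bigl((1-\alpha)-A\bigr)\bigl(C-(1-\alpha)\bigr).$$

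Both factors on the right-hand side are non-negative: the assumption $\xi\ge 1$ gives $(1-\alpha)^\xi \le 1-\alpha$, so $A \le 1-\alpha$; and $C\ge 1-\alpha$ was our running assumption. Therefore $f(s^*)\le s^*$ and, by the fixed-point argument above, $q\le s^* = (1-\alpha)/(1-A-B)$, as claimed. The only mildly delicate step is the algebraic identity rewriting $AC-(1-\alpha)(\alpha-B)$ as the product of two easily-signed factors; this is a routine expansion relying only on $A+B+C=1$, after which the monotonicity properties $A\le 1-\alpha$ (from $\xi\ge 1$) and $C\ge 1-\alpha$ (the non-trivial regime) make the sign analysis immediate.
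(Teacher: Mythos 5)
Your proof is correct and follows essentially the same route as the paper: bound the generating function $\Ec{s^\zeta}$ above by the quadratic $A+Bs+Cs^2$, then use the fixed-point/comparison characterization of the extinction probability together with $p_0\le 1-\alpha$ (from $\xi\ge 1$). The paper reaches the same conclusion by solving the quadratic explicitly for its smaller root $\min(p_0,p_{\ge 2})/p_{\ge 2}\le p_0/p_{\ge 2}$, whereas you verify directly that $s^*=(1-\alpha)/C$ is a super-fixed-point; the two are only cosmetically different.
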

\begin{proof}To prove this, we use the standard fact that $q$ is the smallest $x\in[0,1]$ 
such that $x=\Ec{x^\zeta}$ \cite{AtNe1972}. Note the simple fact that if $f(x)$ and $g(x)$ 
are probability generating functions, then if $f(x)\le g(x)$ for all $x\in [0,1]$ the 
corresponding extinction probabilities $q_f$ and $q_g$ satisfy $q_f \le q_g$. So 
it suffices to find an upper bound on $\Ec{x^\zeta}$ which gives us a computable 
(and small) extinction probability. Writing $p_i=\p{\zeta=i}$, and $p_{\ge 2}=1-p_0-p_1$,
we have, for every $x\in [0,1]$,
\begin{align*}
\Ec{x^\zeta} 
& \le p_0+ (1-p_0-p_{\ge 2}) x + p_{\ge 2} x^2~.
\end{align*}
It follows readily that 
$$
q \le \frac{(p_0+p_{\ge 2})- |p_0-p_{\ge 2}|}{2 p_{\ge 2}}
=\frac{\min\{p_0,p_{\ge 2}\}}{p_{\ge 2}} \le \frac{p_0}{p_{\ge 2}}.
$$
Here, $p_{\ge 2}=1-\Ec{(1-\alpha)^\xi} - \Ec{\xi \alpha (1-\alpha)^{\xi-1}}$ and 
since $\xi\ge 1$, we have $p_0\le 1-\alpha$, which completes the proof.
\end{proof}

The proof of Lemma~\ref{lem:fill-up} goes in two steps. First, one shows that 
for some $\delta>0$, the branching random walk has at least 
$(1+\epsilon/2)^{\delta k}$ individuals in the $\delta k$-th generation, that 
all lie within distance $k/4$ of the center of the cell. We call such individuals 
\emph{decent}. The decent individuals are the starting points of independent branching 
random walks. In order to prove the claim, we show that, with probability no smaller 
than a polynomial in $1/k$, a single of these decent individuals produces 
enough decendants for $\#\{v\in \cU: |v|=k^2, Z_u\in S\}\ge \Ec{\zeta}^{2k^2/3}$ to occur. 

\begin{proof}[Proof of Lemma~\ref{lem:fill-up}]
Consider the genealogical tree of the branching random walk $(Z_u)_{u\in \cU}$, 
and write $(M_i)_{i\ge 0}$, for the associated Galton--Watson process. So we have
\nomenclature{M}{$(M_i)_{i\ge 0}$ the Galton--Watson process underlying the branching random 
walk $Z$}
\[
M_i = \#\{Z_u: u\in \cU, |u|=i\}.
\]
As we already mentioned, we have $\Ec{\zeta}>1$ and the process is supercritical. 
Furthermore, the offspring distribution is bounded ($\zeta\le \kappa$) 
so that Doob's limit law \cite{AtNe1972} implies that, as $m\to\infty$, we have
as $\ell\to\infty$,
$$\frac{M_\ell}{\Ec{M_1}^\ell} \to W$$
in distribution, for some random variable $W$ that is absolutely 
continuous, except possibly at $0$. Furthermore, the limit random 
variable satisfies $\p{W=0}=q$, where $q$ is the extinction probability 
of the Galton--Watson process $(M_i)_{i\ge 0}$. 

Since $q\in (0,1)$, we can find a $\beta>0$ such that $\Cprob{2\beta <W<1/(2\beta)}{W>0}>1-q$. 
It follows that
\[\liminf_{\ell\to\infty} \p{\frac{M_\ell}{\Ec{M_1}^\ell}\in \left[\beta, \frac 1\beta\right]}
\ge \p{W\in \left[2\beta,\frac 1{2\beta}\right]} 
> (1-q)^2,
\]
and in particular, for any $\mu\in (0,1/2)$ and $k$ large enough, 
\[
\p{M_{\lfloor \mu k\rfloor } \ge \beta \Ec{M_1}^{\mu k -1}} \ge (1-q)^2
\]

Recall that an individual $v$ is \emph{decent} if $\|Z_v\|\le k/4$, where $\|\cdot\|$ 
denotes the Euclidean distance. However, the spatial increments are bounded, and for every $v$ such 
that $|v|=\lfloor \mu k\rfloor$, we have 
$$\|Z_v\|\le \mu \le \mu k 2 d.$$
It follows that for $\mu\in (0,1/(8d))$, \emph{every} individual $v$ 
with $|v|=\lfloor \mu k \rfloor$ is decent.
Fix now such a $\mu$. Writing $D_m$ for the number of decent individuals at level $m$, we have 
\begin{equation}\label{eq:num_decent}
\p{D_{\lfloor \mu k \rfloor} < \beta \Ec{M_1}^{\mu k-1}} \le 1-(1-q)^2.
\end{equation}

For every decent individual at depth $\lfloor \mu k\rfloor$, there is a subtree 
that might well give us enough individuals at generation $k^2$ all lying 
in $S$. In order to ensure some level of concentration, we only consider 
the individuals $u$, $|u|=\lfloor \mu k\rfloor$, for which the corresponding 
Doob limit $W_u$ in the subtree rooted at $u$ satisfies $2\beta<W_u<1/(2\beta)$. 
For $\ell\ge 0$ and $u$ such that $|u|\le \ell$ write 
$$M_\ell(u):=\#\{v: u\preceq v, |v|=\ell\}.$$
Then, for all $k$ large enough, 
\begin{align*}
\CExp{M_{k^2}(u)}{2\beta<W_u<1/(2\beta), u~\text{decent}}
& \ge \beta \Ec{M_1}^{k^2-\lfloor \mu k\rfloor} \cdot k^{-c}
\end{align*}
for some $c>0$ whose existence is guaranteed by Lemma~\ref{lem:fill-up-one} below. 
However, for every such individual $u$, 
on the event $\{2\beta<W_u<1/(2\beta)\}$, we have for all $k$ large enough
$$
M_{k^2}(u,S)
:=\#\big\{v:u\preceq v, |v|=k^2, Z_v\in S\big\}
\le \beta^{-1}\Ec{M_1}^{k^2-\lfloor \mu k\rfloor}.$$
It follows that 
$$
\Cprob{M_{k^2}(u,S)\ge \frac \beta 2 
\Ec{M_1}^{k^2-\lfloor \mu k \rfloor} k^{-c}}{2\beta <W_u<1/(2\beta), u~\text{decent}} 
\ge \frac \beta 2 \cdot k^{-c},
$$
hence 
\begin{equation}\label{eq:desc_decent}
\Cprob{M_{k^2}(u,S)\ge \frac \beta 2 
\Ec{M_1}^{k^2-\lfloor \mu k \rfloor} k^{-c}}{u~\text{decent}}
\ge \frac \beta 2 k^{-c} (1-q)^2.
\end{equation}

Finally, combining \eqref{eq:num_decent} and \eqref{eq:desc_decent}, we see that, 
for $k$ large enough, the probability that we do not have at least $\Ec{M_1}^{2k^2/3}$ 
individuals of the $k^2$-th generation that lie in $B$ is at most 
\begin{align*}
&\p{D_{\lfloor \mu k \rfloor} < \beta \Ec{M_1}^{\mu k-1}}
+ \Cprob{M_{k^2}(u,S)<\Ec{M_1}^{2k^2/3}}{u~\text{decent}}^{\beta \Ec{M_1}^{\mu k -1}}\\
&\le 1-(1-q)^2+\kappa^{-k\mu}
+\pran{1-\frac \beta 2 k^{-c}(1-q)^2}^{\beta \Ec{M_1}^{\mu k}}\\
&\le 2 q,
\end{align*}
for $k$ sufficiently large.
\end{proof}

It remains to prove the following key ingredient of the proof of 
Lemma \ref{lem:fill-up}.

\begin{lemma}\label{lem:fill-up-one}
Let $(R_i)_{i\ge 0}$ be a random walk on $\Z^2$ where the increments 
are i.i.d.\ uniformly random in $\cA$
(where $\cA$ is defined above just before Lemma \ref{lem:fill-up}). 
Then, for any $\mu\in (0,1/2)$, there exists a constant $c>0$,
such that for any 
$x\in \{-\lfloor kd/4\rfloor, \dots, \lfloor kd/4\rfloor\}^2$
and $y\in\{-\lfloor kd/2\rfloor, \dots, \lfloor kd/2\rfloor\}^2$,
$$\Cprob{R_{k^2-\mu k}=y; R_i\in \cS, 0\le i\le k^2-\mu k}
{R_0=x}\ge k^{-c},$$
for all $k$ large enough. 
\end{lemma}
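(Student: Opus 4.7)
The statement is a constrained local central limit theorem for the walk $(R_i)$: an LCLT lower bound on $\Pr(R_n = y \mid R_0 = x)$, strengthened by the requirement that the path remain in the cell $\cS$. The plan is to treat the two aspects separately, first handling the unconstrained probability and then showing that the boundary constraint costs only a polynomial factor in $k$.

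\emph{Unconstrained LCLT.} Since $\cA$ is symmetric about the origin, the increments of $R$ have zero mean and, by Lemma~\ref{lem:number-points} together with a direct second-moment computation over the discrete disc, isotropic covariance $\sigma^2 I$ with $\sigma^2 = \Theta(d^2)$. The increments are bounded and, after a standard aperiodicity adjustment if needed (e.g.\ by considering two-step transitions), the classical local central limit theorem on $\Z^2$ yields, uniformly in $z\in\Z^2$,
\[
\Pr(R_n = z \mid R_0 = x) = \frac{1}{2\pi \sigma^2 n}\exp\pran{-\frac{|z-x|^2}{2\sigma^2 n}} + O(n^{-3/2}).
\]
Applying this with $n = k^2 - \mu k \sim k^2$, $z = y$ and $|y-x|\le kd$, the Gaussian exponent is $O(d^2) = O(1)$, so $\Pr(R_n = y \mid R_0 = x) \ge c_1/k^2$ for some $c_1 = c_1(d)>0$ and all $k$ large enough.

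\emph{Adding the boundary constraint.} To incorporate the event $E = \{R_i\in \cS,\ 0\le i\le n\}$, rescale space by $1/(kd)$ and time by $1/n$. The rescaled starting point $\bar x = x/(kd) \in [-1/4, 1/4]^2$ sits well inside the rescaled cell $[-1/2, 1/2]^2$, and $\bar y = y/(kd)\in [-1/2, 1/2]^2$. Since $\sigma\sqrt n/(kd) = \Theta(1)$, the invariance principle for conditioned random walks shows that the law of the rescaled bridge converges to that of a Brownian bridge in $\R^2$ of diffusivity $\Theta(1)$, from $\bar x$ to $\bar y$, of duration $1$. The probability that such a bridge stays inside $[-1/2, 1/2]^2$ is positive and, by the continuous Dirichlet heat kernel on the square, decays at worst linearly in the distance of $\bar y$ to the boundary. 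For any lattice $y\in\cS$ this distance is at least $\Omega(1/k)$, so we obtain $\Pr(E \mid R_0 = x, R_n = y) \ge k^{-c'}$ for some constant $c'$. Multiplying with the unconstrained LCLT yields $\Pr(R_n = y,\ E \mid R_0 = x) \ge k^{-c}$ for some constant $c > 0$.

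\emph{Main obstacle.} The subtle step is converting the weak invariance principle into a quantitative lower bound uniform in $y$, particularly when $y$ lies close to $\partial\cS$: a clean way is a direct comparison between the discrete killed heat kernel on the square and its continuous Dirichlet counterpart, which is standard but technical. An alternative that avoids heat-kernel estimates altogether is to split the $n$ steps into, say, three blocks of lengths $\Theta(k^2)$, with intermediate waypoints chosen well inside $\cS$ and on the segment from $x$ to $y$, and to apply the LCLT to each block: each piece then contributes at most a constant power of $k^{-1}$, and the product remains polynomial in $k$, which is all that is needed.
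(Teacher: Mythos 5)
Your plan is correct at a high level but takes a genuinely different route from the paper's. The paper does not use an invariance principle or a discrete Dirichlet heat kernel comparison; it invokes the \emph{strong} (KMT) embedding of Koml\'os--Major--Tusn\'ady in Zaitsev's multidimensional form, coupling $(R_i)_{0\le i\le k^2}$ to a planar Brownian motion $(\Xi_t)$ so that $\max_{0\le i\le k^2}\|R_i-\Xi_i\|>c_1\log k$ with probability only $O(k^{-c_2})$. It then shows, by an \emph{explicit Brownian} estimate, that $\Xi$ visits $S(y)$ at time $k^2-\mu k-m$ while staying at distance $\ge c_1\log k$ from $\partial\cS$, with probability at least $k^{-2}$; on the coupling event this forces $R_i\in\cS$ throughout and places $R_{k^2-\mu k-m}$ within $c_1\log k$ of $y$. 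A direct deterministic argument over the last $m=\Theta(\log k / d)$ increments then lands $R$ exactly at $y$ at a further multiplicative cost of $a^{-m}$, which is polynomial in $1/k$. The point of KMT is precisely to transfer the boundary estimate to the continuum, where it is explicit, thereby sidestepping the discrete estimate you flag as the ``main obstacle.''

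Regarding the obstacle itself: your first remedy (a direct comparison of the discrete killed heat kernel with its continuous Dirichlet counterpart) would close the gap, but it carries essentially all the technical weight of the lemma and cannot be dismissed as ``standard but technical'' --- it is exactly the step the paper's coupling is designed to avoid. Your ``three blocks'' alternative does \emph{not} close the gap as stated: applying the unconstrained LCLT block-by-block lower-bounds the probability of passing through the waypoints, but says nothing about the event $\{R_i\in\cS \text{ on the block}\}$. Over a block of length $\Theta(k^2)$ the walk fluctuates by $\Theta(kd)$, which is the full width of $\cS$, so confinement is not generic. The last block is worst, since $y$ may lie on $\partial\cS$ and one then needs a gambler's-ruin/reflection lower bound on the conditioned discrete bridge --- exactly the quantitative boundary estimate you were trying to avoid. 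To rescue the chaining idea you would need to shorten the blocks so that excursion past $\partial\cS$ is unlikely and then handle the endpoint block with a separate reflection estimate; at that point you have effectively reconstructed the discrete Dirichlet bound. Either flesh out that bound, or, as the paper does, switch to the KMT coupling and do the boundary work in the continuum.
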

\nomenclature[R]{$(R_i)_{i\ge 0}$}{The random walk on $\Z^2$ used in the strong coupling}
\begin{proof}
Let $\cS=[-\lfloor kd/2\rfloor, \lfloor kd/2\rfloor]^2$ be the scaled version of the cell. 
The argument relies on the strong embedding theorem of \citet*{KoMaTu1975a} or, 
more precisely, its multidimensional version by \citet{Zaitsev1998a} 
(see also \cite{Ei1989a}): 
there exists a coupling of $(R_i)_{0\le i\le k^2}$ with a Brownian motion 
$(\Xi_t)_{0\le t\le k^2}$ such that for every $c_2>0$ there exists a $c_1>0$ such that, 
for every $k$ large enough,
\begin{equation}\label{eq:kmt}
\p{\max_{0\le i\le k^2}\| R_i-\Xi_i\| \ge c_1 \log k } \le k^{-c_2},
\end{equation}
where $\|\cdot\|$ denotes Euclidean norm in $\R^2$.  
We now consider such a coupling, and for a constant $c_1$ to be chosen later,
let $E=E(c_1)$ be the event that $\|R_i-\Xi_i\|\le c_1 \log k$
for every $0\le i\le k^2$. 
Let $y\in \{-\lfloor kd/2\rfloor, \dots, \lfloor kd/2\rfloor\}^2$, and 
recall that $S(y)$ denotes the corresponding box in $\cS$. 
On $E$, if it turns out that $\Xi_{k^2-\mu k-m}\in S(y)$, 
then $R_{k^2-\mu k -m}$ is reasonably close to $y$ and
there is a decent chance that it ends up at $y$ at time $k^2-\mu k$. 
More precisely, if for some integer $m\le k$ 
we have $\Xi_{k^2-\mu k-m}\in S(y)$, then $\|R_{k^2-\mu k -m}-y\|\le c_1 \log k$, and 
we let $H=H(c_1,m)$ denote the latter event. Then, we have 
\begin{align*}
& \p{H, R_i\in \cS, 0\le i\le k^2-\mu k-m} \\
& \ge \p{H, R_i\in \cS, 0\le i\le k^2-\mu k-m, E} \\
& \ge \p{\Xi_{k^2-\mu k-m}=S(y), \Xi_i\in \cS, 0\le i\le k^2-\mu k-m, E}\\
& \ge \p{\Xi_{k^2-\mu k-m}=S(y), \Xi_i\in \cS, 0\le i\le k^2-\mu k-m} - \p{E^c}.
\end{align*}
Now, by the local limit theorem, for all $k$ large enough, one has 
$$
\inf_{0\le m\le k}
\p{\Xi_{k^2-\mu k-m} \in S(y) ;\inf_{1\le i\le k^2} d(\Xi_i,\cS^c)\le c_1 \log k} 
\ge k^{-2}
$$
where $d(x,\cS^c)$ denotes the distance from $x\in \R^2$ to the set $\cS^c$. 
Choosing $c_1$ be the constant such that $c_2=3$ in \eqref{eq:kmt}, we obtain 
\begin{equation}\label{eq:rw_core}
\inf_{0\le m\le k}\p{H, R_i\in \cS, 0\le i\le k^2-\mu k-m} \ge k^{-3},
\end{equation}
for all $k$ large enough.
In particular, with $m=\lfloor c_1 \log k/(2d)\rfloor$ it is possible for the random walk
to go to $y$ within the $m$ steps, while staying within $\cS$. 
It follows that, with $a:=|\cA|$ the number of potential increments at every step,
\begin{equation}\label{eq:rw_laststeps}
\Cprob{R_{k^2-\mu k}=y,R_{k^2-\mu k -i }\in \cS, 0\le i\le m}{H(c_1,m)}
\ge a^{-m}.
\end{equation}
Putting \eqref{eq:rw_core} and \eqref{eq:rw_laststeps} together completes the proof 
for $c=3+c_1 \log a$.
\end{proof}

\section{An upper bound on the size of the largest component for $c=1$}
\label{sec:c1}

In this section, we prove Theorem~\ref{thm:sublinear_critical} about the size 
of the largest component of $\Gamma_n(r_n,1)$. Write $\sC_1=\sC_1(\Gamma_n(r_n,1))$ 
for the size of the largest connected component. 
Although Theorem~\ref{thm:sublinear_critical} is suboptimal, the condition on $r_n$ cannot 
be replaced altogether, because it is easy to show that for fixed $r_n > 0 $, 
$\sC_1= \Theta (n)$ with high probability when $\xi=1$ almost surely. 
This is also the case for sequences $r_n$ that tend to $0$ slowly with $n$. 

The main technical result is the following tail bound on the size of the 
largest connected component. 

\begin{lemma}\label{lem:upper_c1}
Let $r_n > 0, t \ge 1, \epsilon > 0$. Then, 
$$
\p{ \sC_1 \ge 2 + (1+tnr_n^2)^3 (1+\epsilon)^2 \log^2 n }
\le n^{-\epsilon + \frac 1 {1 + tnr_n^2} } + n^2 e^{(n-2)\pi r_n^2 ( t - 1 - t \log t)} .
$$
\end{lemma}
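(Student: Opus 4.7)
The plan is to match the two summands: a Chernoff-style bound on the local density of $\bX$ (second term), and a structural estimate on the component size conditional on regularity (first term).

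Set $M := 1 + tnr_n^2$ and let $A$ denote the event that $\rho(X_i) \le M$ for every $i \in [n]$. Since $[0,1]^2$ is viewed as a torus, $\rho(X_i) - 1$ is, conditionally on $X_i$, a $\bin(n-1,\pi r_n^2)$ variable, and Chernoff's upper-tail bound for binomials gives
\[
\p{\rho(X_i) - 1 \ge t(n-2)\pi r_n^2} \le \exp\bigl((n-2)\pi r_n^2 (t - 1 - t \log t)\bigr).
\]
A union bound over $i \in [n]$ together with a factor of $n$ absorbing the small slack between $1 + t(n-2)\pi r_n^2$ and $M$ would yield $\p{A^c}$ bounded by the second summand. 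From now on the argument conditions on $A$.

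For each fixed $v$ the goal is to show $\p{|C(v)| \ge K \mid A} \le n^{-1-\epsilon+1/M}$ (with $C(v)$ the connected component of $v$ in $\Gamma_n$); a union bound over $v$ then gives the first summand. The key geometric observation is that on $A$, any connected subgraph of $\Gamma_n$ of graph-diameter $D$ is contained in a Euclidean ball of radius $D r_n$, and covering this ball by $O(D^2)$ balls of radius $r_n$ (each holding at most $M$ points of $\bX$) shows such a subgraph has at most $c_0 M D^2$ vertices for an absolute constant $c_0$. Hence $|C(v)| \ge K = 2 + M^3 (1+\epsilon)^2 \log^2 n$ forces graph-diameter at least $D_0 := (1+\epsilon) M \log n / \sqrt{c_0}$.

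Next, because $\Gamma_n^+$ has out-degree exactly one, each component of $\Gamma_n$ is ``rho-shaped'': a unique directed cycle $\mathcal{C}_v$ surrounded by reverse-trees $T_u^-$ rooted at each $u \in \mathcal{C}_v$, whose vertices are those whose directed out-trajectory first meets the cycle at $u$. The graph-diameter is therefore at most $|\mathcal{C}_v| + 2\max_u \mathrm{depth}(T_u^-)$, and I would split the tail into $\{|\mathcal{C}_v| \ge D_0/2\}$ and $\{\max_u \mathrm{depth}(T_u^-) \ge D_0/4\}$. For the reverse-tree depth, the reverse breadth-first exploration is dominated by a Galton--Watson process with offspring $\bin(\rho(\cdot)-1, 1/(\rho(\cdot)-1))$ of mean one on $A$; a Markov estimate at depth $D_0$, combined with the geometric containment that confines the branching to a pool of size $O(M D_0^2)$, yields a polynomially small tail. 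A parallel enumeration of candidate directed cycles through $v$ of length $\ge D_0/2$, using that each cyclic sequence is realised with conditional probability $\prod 1/(\rho(\cdot)-1)$, handles the cycle contribution.

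The main obstacle is extracting the $+1/M$ correction in the exponent of the first summand. A naive Chernoff/Markov argument gives only a correction of $n^{-\epsilon + O(1)}$; the sharper bound seems to require either a further conditioning on a lower local-density event, or a size-biased change of measure that encodes the $M/(M-1) \approx 1 + 1/M$ slack per edge into the exponent. Dovetailing the cycle-length and reverse-tree-depth estimates within the additive decomposition of the diameter also demands care to avoid double-counting contributions near the cycle.
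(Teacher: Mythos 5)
Your ``rho-shape'' decomposition into a directed cycle and reverse trees is a valid description of the functional-graph components, but the branching-process route you propose for the reverse-tree depth does not close. As you observe, the reverse offspring distribution has mean one on $A$, so the reverse exploration is a \emph{critical} Galton--Watson process; the probability that a critical process survives to depth $\ell$ decays only like $1/\ell$, not exponentially. With $D_0 = \Theta(M\log n)$ a Markov-type bound then gives $O(1/(M\log n))$, nowhere near $n^{-\epsilon}$, and truncating the progeny to a pool of size $O(MD_0^2)$ does not repair this. The paper sidesteps criticality entirely by working with the \emph{forward} dynamics: from any $i$, follow the unique out-edges to form a maximal path $P_i$ of distinct vertices. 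At every step the vertex just left lies in the visibility ball of the current one, so the chance of closing the path on an already-visited vertex is at least $1/(\rho(\cdot)-1)\ge 1/M$ on $A$, yielding the geometric tail $\p{|P_i|>\ell}\le(1-1/M)^\ell$. Since $P_i$ terminates by returning to the cycle and already traverses the entire cycle, $\max_i|P_i|\le\ell$ simultaneously bounds the cycle length and every vertex's distance to any cycle vertex by $\ell$, in one shot. A union bound over $i$ with $\ell=\lfloor M(1+\epsilon)\log n\rfloor$ then produces exactly $n^{-\epsilon+1/M}$, including the $+1/M$ correction you flagged as elusive.

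A secondary issue: your event $A$ controls $\rho$ only at radius $r_n$, which is not enough to turn the diameter bound into the stated size bound without an unexplicit covering constant $c_0$. The paper instead takes $A=\{\sup_{s\ge r_n}(\rho_s(X_i)-2)/(ns^2)<t\text{ for all }i\}$; once the component lies in $B(X_j,r_n\ell)$, this gives $\sC_1\le 2+tn(r_n\ell)^2$ with the precise constants of the lemma. The union over scales costs nothing because $t-1-t\log t<0$ makes the Chernoff exponent decreasing in $s$.
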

\begin{proof}
For $\xi=1$, the structure of the graph is that of a mapping and $\Gamma_n$ is
of a collection of connected components each of which consist of
either a tree
or a unique cycle 
from which some trees are pending. In order to bound the size $\sC_1$ of the largest 
connected component, we first bound the length of the longest directed path in $\Gamma_n^+$. 
Since the edges bind vertices that are at most $r_n$ apart, this bounds the extent of the 
connected components hence their sizes. 

Recall that $\rho_{r_n}(x)=|B(x,r_n)\cap \bX|$ denotes the number of $X_i$'s in $B(x,r_n)$.
We first show that for $t > 1$ (and thus, $t-1-t \log t < 0$), we have
$$
\p{ \max_{1\le i\le n} \sup_{s \ge r_n} \frac {\rho_s(X_i) -2}{ns^2} \ge t }
\le
n^2 e^{(n-2)\pi r_n^2 ( t - 1 - t \log t)}.
$$
Observe that the supremum in this inequality is reached for $\rho_s(X_i)$
for some $s=\|X_i-X_j\|$. Also, $\rho_s(x)$ is distributed as a binomial 
random variable with parameters $n$ and $\pi s^2$ (we are in the torus), 
and $\rho_{\|X_i-X_j\|}(X_i)$ is approximately equal to $2 + \bin(n-2; \pi \|X_i-X_j\|^2)$. 
By Chernoff's bound \cite{Che52} (see also \cite{DeZe1998,
JaLuRu2000}), for $u > 1$,
$$
\p{\bin(k;p)\ge u k p} \le e^{kp(u-1-u\log u)},
$$
so that here, we have
$$
\p{\bin(n-2, \pi s^2) \ge 2 + u (n-2) \pi s^2 } 
\le e^{(n-2)\pi s^2 (u- 1 - u \log u)}.
$$
Thus,
\begin{align*}
\p{ \max_{1\le i\le n} \sup_{s \ge r_n} \frac {\rho_s(X_i) -2}{ns^2} \ge t }
&\le \binom n 2 \sup_{s \ge r_n} e^{(n-2)\pi s^2 ( t - 1 - t \log t ) } \\
&\le n^2 e^{(n-2)\pi r_n^2 ( t - 1 - t \log t)}.
\end{align*}
Introduce the event 
$$
A:=\left\{\max_{1\le i\le n} \sup_{s \ge r_n} \frac {\rho_s(X_i) -2}{ns^2} < t\right\}.
$$
Starting from a vertex $i$, we can follow the directed links in $\Gamma_n^+$,
forming a \emph{maximal} path $P_i$ of distinct vertices. 
The last vertex $j$ in this path must be pointing towards a vertex $k$ of $P_i$ 
(potentially itself). 
From each vertex in $P_i$ the probability of linking to a $k$ higher up in the path 
is at least 
$$
\frac 1 {\rho_{r_n}(X_i) -1 } \ge \frac 1 {1 + t n r_n^2 }
$$
if $A$ occurs. Writing $|P_i|$ for the number of vertices of $P_i$, we see that,
since the choices of links are independent,
$$
\p{|P_i| > \ell } \le \left( 1- \frac 1 {1 + t n r_n^2 } \right)^\ell.
$$
By the union bound, conditional on $X_1,\ldots,X_n$ such that $A$ holds,
$$
\p{ \max_{1\le i\le n} |P_i| > \ell } 
\le n \left( 1 - \frac 1 {1 + t n r_n^2 } \right)^\ell 
\le n \exp \left(- \frac \ell {1 + t nr_n^2} \right).
$$

Now, if the maximum length of a directed path $\max_i |P_i|$ is no more than $\ell$, 
then every vertex is within $\ell$ edges of \emph{any} vertex of the unique cycle of 
the connected component. Thus, if this occurs, then every connected component 
is contained within a ball $B(X_j,r_n\ell)$ for some $1\le j\le n$. It follows that 
\begin{align*}
\p { \sC_1 \ge 2 + n (r_n \ell)^2 t }
&\le \p{ A^c } + \p{ A, \sC_1 \ge 2 + n (r_n \ell)^2 t } \\
&\le \p{ A^c } + \p{ A, \max_{1\le i\le n} |P_i| > \ell } \\
&\le n^2 e^{(n-2)\pi r_n^2 ( t - 1 - t \log t)} 
+ n e^{- \frac \ell {1 + t nr_n^2}} . 
\end{align*}
For fixed $\epsilon > 0$, take $\ell = \lfloor { (1 + tnr_n^2) (1+\epsilon) \log n } \rfloor.$
We conclude that
$$
\p{ \sC_1 \ge 2 + (1+tnr_n^2)^3 (1+\epsilon)^2 \log^2 n }
\le n^{-\epsilon + \frac 1 {1+tnr_n^2}} + n^2 e^{(n-2)\pi r_n^2 ( t - 1 - t \log t )},
$$
which completes the proof of the lemma.
\end{proof}

\begin{proof}[Proof of Theorem~\ref{thm:sublinear_critical}]
Lemma~\ref{lem:upper_c1} can be used for various ranges of $r_n$. In the entire proof, we 
use it with $\epsilon = 2$ to ensure that the first term in the upper bound there is 
$o(1)$.
We split the region $r_n\in [0, o(n \log n)^{-1/3}]$ into two, and first consider
$$
r_n \le \sqrt{ \frac{\log n}{\pi n}  }.
$$
In this range, we define $t$ as the solution of
$$
t \log t + 1 - t = \frac { 3 \log n}{ \pi n r_n^2 }.
$$
Observe that since the right-hand side is at least $3>1$, there is indeed a unique solution. 
Note that this solution could have an infinite limit supremum, but
its limit infimum is larger than one (so that one can use Lemma~\ref{lem:upper_c1} with this 
value for $t$). Moreover, one has
$$
t = \Theta \left( \frac { 3 \log n} {\pi n r_n^2 \log 
\big( \frac { 3 \log n}{\pi n r_n^2 } \big) } \right),
$$
so that
$$
(1+tnr_n^2)^3 (\log n)^2 
= \Theta \left( \frac { \log^5 n}{\log^3 
\big( \frac { 3 \log n}{\pi n r_n^2 } \big) } \right)
\le \Theta ( \log^5 n ).
$$
By Lemma~\ref{lem:upper_c1}, in this range of $r_n$, we have $\sC_1 \le C \log^5 n$ with 
probability tending to one as $n \to \infty$, where $C$ is a fixed constant (uniform over all 
sequences $r_n$ in this range).

Next, consider 
$$
r_n \ge \sqrt{ \frac {\log n}{\pi n}  }.
$$
Define $t_0 = 3.59112167\ldots$ as the unique solution greater than one of
$t_0 \log t_0 = t_0 + 1$. With this choice, if $t > t_0$, the upper bound 
in the inequality of Lemma~\ref{lem:upper_c1} is $o(1)$.
Note that
$$
(1+tnr_n^2)^3 (\log n)^2 
= \Theta \left( n^3 (\log n)^2 r_n^6 \right),
$$
which is $o(n)$ if $r_n = o( (n \log n)^{-1/3} )$. Overall, we have proved that 
$\sC_1=o(n)$ as long as $r_n=o((n\log n)^{-1/3})$. 
\end{proof}

\section{Concluding remarks and open questions}

From a practical point of view, the sparsification done via irrigation graphs
is especially interesting since an average degree of $(1+\epsilon)$
guarantees that the majority of the nodes are part of the network. It is proved 
in \cite{BrDeFrLu2011a} that catching all the outsiders would require an 
average degree of about $\Theta(\sqrt{\log n/\log\log n})$, so that it might not 
be worth the cost. 

Theorem~\ref{thm:sublinear_critical} is suboptimal in the range it allows for $r$, and 
it would be interesting to find a wider range of $r$ for which 
one does not have a connected component of linear size. It is not quite clear 
that there is a threshold since the property that there exists 
a connected component of size at least $c n$ is not clearly monotonic in $r$ for 
fixed $\xi$. It would be of interest to know whether the property that 
a giant exists with high probability is monotonic in $r_n$ (for fixed $\xi$):
is it the case that if a giant exists whp for a given $r_n$ and fixed $\xi$,
then a giant also exists whp for any sequence $r_n'$ with $r_n' \ge r_n$ and the
same fixed $\xi$? Assuming this is the case, it would be interesting to study where 
the threshold $r^\star=r^\star(\xi)$ is for the existence of a giant when $\xi=1$, 
but also for other (constant) values.

The question of the spanning ratio of the giant component is another interesting one. 
Of course, for $\xi$ such that $\e \xi\ge 1+\epsilon$, the largest connected 
component has unbounded spanning ratio if we consider the definition
$$
\max_{1\le i,j\le n} \frac{\|X_i-X_j\|}{d_{\Gamma}(i,j)},
$$
where $d_{\Gamma}$ denotes the graph distance in $\Gamma_n(r_n,\xi)$. 
However, even if we disallow the pairs of points that are
either disconnected or
 too close, that is 
for which $\|X_i-X_j\|\le r$, it is not clear that the ratio becomes bounded. 
Indeed, our construction only guarantees that most points in the same cells get 
connected via two webs that hook up potentially far from that cell. 
In \cite{BrDeFrLu2011a} it is shown that the spanning ratio of $\Gamma(r_n,c_n)$
is bounded whp when $r_n\ge \gamma \sqrt{\log n/n}$ and $c_n \ge \mu\sqrt{\log n}$ 
for sufficiently large constants $\gamma$ and $\mu$.

Finally, our techniques only show that when $\E{\xi}>1$ the largest connected 
component spans most of the vertices, but we have no control on the number of 
vertices that are left over. The question of the size of the second largest 
connected component may possibly be tackled by guessing which configurations 
are most ``economical'' in terms of avoiding to connect to the outside world, 
as in \cite{BrDeFrLu2011a}.

\section*{Acknowledgement}
Part of this work has been conducted while attending a workshop at the
Banff International Research Station for the workshop on Models of
Sparse Graphs and Network Algorithms.

\appendix

\section{Proof of uniformity lemma}

\begin{proof}[Proof of Lemma~\ref{lem:strong_uniform}]
For any box $S$, the number points $|\bX \cap S|$ is distributed like a binomial 
random variable with parameters $n$ and $r'^2/(4d^2)$. 
By a classical concentration bound for binomial random variables 
\cite[see, e.g.,][]{JaLuRu2000,BoLuMa2012a}, we have for $\delta\in (0,1)$ and $p\in (0,1)$,
\begin{equation}\label{eq:tail_binom}
\pc{|\bin(n,p)-np|\ge \delta np} \le 2 e^{-np \delta^2/3 }.
\end{equation}
Now, every cell $Q$ contains $k^2d^2$ boxes, and by the union bound we have, 
for all $n$ large enough,
\begin{align*}
\p{Q\text{ is not } \delta\text{-good}} 
& \le 2 k^2d^2 e^{-nr_n'^2 \delta^2/(3\cdot 4d^2)}\\
& \le 2 k^2d^2 n^{-\gamma^2 \delta^2/(24 d^2)},
\end{align*}
since $\sqrt 2 r_n' \ge r_n$ for any $k\ge 1$ and all $n$ large enough. Furthermore, 
if there exists one cell that is not $\delta$-good, then one of the $(mkd)^2$ boxes
has a number of points that is out of range, so that as $n\to\infty$,
\begin{align*}
\p{\exists Q: Q \text{ is not } \delta\text{-good}} 
& \le 2 (mkd)^2 n^{-\gamma^2 \delta^2/(24 d^2)}\\
& \le n^{1-\gamma^2 \delta^2/(24 d^2) + o(1)},
\end{align*}
which tends to zero provided that $\gamma^2\ge 24 d^2/\delta^2.$
\end{proof}


\setlength{\bibsep}{.3em}
\bibliographystyle{plainnat}
\bibliography{connectivity}

\newpage
\small
\printnomenclature

\end{document}